\documentclass[12pt,letterpaper]{article}
\usepackage[latin1]{inputenc}
\usepackage[all]{xy}
\usepackage{bbm}
\usepackage{pifont}
\usepackage{mathrsfs}
\usepackage{amsfonts}
\usepackage{bm}
\usepackage{amsmath,amssymb,amsthm,stmaryrd}
\usepackage{relsize}
\usepackage[nottoc,notlot,notlof]{tocbibind}
\usepackage{mdwlist}
\usepackage{natbib}
\usepackage{hyperref}
\usepackage{authblk}
\bibliographystyle{plainnat}
\theoremstyle{plain}
\newtheorem{teorema}{Theorem}[section]

\newtheorem{lema}{Lemma}[section]
\newtheorem{proposicion}[teorema]{Proposition}
\theoremstyle{definition}

\newtheorem{definicion}{Definition}[section]

\newtheorem{observacion}{Observation}[section]

\newcommand{\ob}{\mathrm{Ob}}

\newcommand{\uno}{\mathbf{1}}

\newcommand{\mnd}{\mathbf{Mnd}}
\newcommand{\emo}{\mathbf{EM}}
\newcommand{\kle}{\mathbf{Kl}}
\newcommand{\EM}{\mathrm{EM}}
\newcommand{\Kl}{\mathrm{Kl}}
\newcommand{\cat}{\mathbf{CAT}}
\newcommand{\Cat}{\mathbf{Cat}}

\newcommand*{\email}[1]{%
    \small\href{mailto:#1}{#1}\par
    }
\xyoption{2cell}
\xyoption{curve}
\xyoption{rotate}
\UseAllTwocells

\title{Another characterization of no-iteration distributive laws}
\author{\normalsize Enrique Ruiz Hernández}
\affil{\normalsize Centro de Investigación en Teoría de Categorías y sus Aplicaciones\\ \small\email{e.ruiz-hernandez@cinvcat.org.mx}}
\date{}

\begin{document}
\catcode`\"=12\relax
\maketitle
\begin{abstract}
We provide a characterization of no-iteration distributive laws in terms of its monads in extensive form only. To do that, it is necessary to take account of both right and left extension systems. We also give, in this right-left perspective, characterizations of the 1-cells and 2-cells in $\emo(K)$ and $\kle(K)$.
\end{abstract}

\section{Introduction}

\noindent In \citet{MW2010}, it is established that a distributive law in a 2-category can alternatively be defined in extensive form (dispensing with the iterates $tt$ and $ttt$ of the usual definition of a distributive law) as in the monad case (see \citet{eM1976}, Exercise 1.3.12). However, the result is (rather ``would be'' since a definition is not given; see Theorem~\ref{T:bijdistlawalg}) a definition in terms of the algebras of one of the monads and in terms of the other monad, both in extensive form. So we wondered if it was possible to have a definition of a no-iteration distributive law not using the algebras (as it is done in \citet{MW2010}), but instead, using directly the extensive form of the monads involved in the distributive law.

Such a definition is possible: we need take account of both right and left extension systems.

When we speak of the 1-cells and the 2-cells in $\emo(K)$ and $\kle(K)$, where $K$ is a 2-category, we shall follow the nomenclature in \citet{CS2010}. They denote $\emo(\Cat)$ and $\kle(\Cat)$ as $\mnd_\EM$ and $\mnd_\Kl$, resp.

In section~\ref{S:1}, we present the prerequisites in order to provide a clear framework for this the paper. In section~\ref{S:2}, we treat in this right-left perspective the 1-cells and 2-cells in $\emo(K)$ and in $\kle(K)$. In section~\ref{S:3}, we do the same for distributive laws; this is where we give the sought definition (characterization). In this section we also give a definition adapted to the case $K=\Cat$, and finally we provide a characterization in between that we think could be useful to produce a more tractable definition for the pseudo case and extend the results in \citet{MW2013}

\section{Monads}\label{S:1}

\begin{definicion}
Let $A$ be a category. A monad on $A$ in monoidal form consists of
\begin{enumerate*}
 \item[(i)] a functor $T:A\rightarrow A$,
 \item[(ii)] a natural transformation $\eta:1_A\Rightarrow T$,
 \item[(iii)] a natural transformation $\mu:TT\Rightarrow T$ 
\end{enumerate*}
such that the following diagrams commute:
$$\xymatrix{
T\ar[r]^{\eta T}\ar[dr]_1 & TT\ar[d]^\mu & T\ar[l]_{T\eta}\ar[dl]^1 & &
TTT\ar[r]^{T\mu}\ar[d]_{\mu T} & TT\ar[d]^\mu\\
& T &, & &
TT\ar[r]_\mu & T.
}$$
\end{definicion}

\begin{definicion}[\citet{eM1976}]\label{D:extensiveform}
A \textit{monad in extensive form on a category $A$} consists of
\begin{enumerate*}
 \item[(i)] a map $T:\ob A\rightarrow\ob A$,
 \item[(ii)] a family of arrows $\eta a:a\rightarrow Ta$ with $a\in A$,
 \item[(iii)] a family of maps $(-)^\mathbbm{T}_{a,b}:A(a,Tb)\rightarrow A(Ta,Tb)$
\end{enumerate*}
such that
\begin{enumerate*}
 \item $(\eta a)^\mathbbm{T}=1_{Ta}$,
 \item for every $f:a\rightarrow Tb$ the diagram
 $$\xymatrix{
 a\ar[r]^{\eta a}\ar[dr]_f & Ta\ar[d]^{f^\mathbbm{T}}\\
 & Tb
 }$$
 commutes,
 \item for every $f:a\rightarrow Tb$ and every $g:b\rightarrow Tc$ the diagram
 $$\xymatrix{
 Ta\ar[r]^{f^\mathbbm{T}}\ar[dr]_{(g^\mathbbm{T}f)^\mathbbm{T}} & Tb\ar[d]^{g^\mathbbm{T}}\\
  & Tc
 }$$
 commutes.
\end{enumerate*}

We call $(-)^\mathbbm{T}$ the \textit{extension operation of $T$}.
\end{definicion}

\begin{proposicion}
Let $A$ be a category. There is a bijection between the monads $(T,\eta,\mu)$ on $A$ in monoidal form and the monads $(T,\eta,(-)^\mathbbm{T})$ on $A$ in extensive form.
\end{proposicion}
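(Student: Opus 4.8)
The plan is to write down the two passages explicitly and check that they are mutually inverse; in both directions the object map $T\colon\ob A\to\ob A$ and the unit $\eta$ are carried over unchanged, so only the multiplicative data has to be matched up.

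\emph{From monoidal to extensive.} Given $(T,\eta,\mu)$, I would set $f^{\mathbbm{T}}:=\mu b\cdot Tf\colon Ta\to Tb$ for $f\colon a\to Tb$. Then axiom~1 is exactly the right unit law $\mu\cdot T\eta=1$; axiom~2 follows from naturality of $\eta$ together with the left unit law, since $(\mu b\cdot Tf)\cdot\eta a=\mu b\cdot\eta(Tb)\cdot f=f$; and axiom~3 is the computation $g^{\mathbbm{T}}\cdot f^{\mathbbm{T}}=\mu c\cdot Tg\cdot\mu b\cdot Tf=\mu c\cdot\mu(Tc)\cdot TTg\cdot Tf=\mu c\cdot T\mu c\cdot TTg\cdot Tf=\mu c\cdot T(g^{\mathbbm{T}}f)=(g^{\mathbbm{T}}f)^{\mathbbm{T}}$, using naturality of $\mu$ and then associativity.

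\emph{From extensive to monoidal.} Given $(T,\eta,(-)^{\mathbbm{T}})$, the first job is to promote the object map $T$ to a functor by $Th:=(\eta b\cdot h)^{\mathbbm{T}}$ for $h\colon a\to b$: preservation of identities is axiom~1, and $T(kh)=Tk\cdot Th$ follows from axiom~3 applied to $\eta b\cdot h$ and $\eta c\cdot k$ after rewriting $\eta c\cdot k\cdot h=(\eta c\cdot k)^{\mathbbm{T}}\cdot\eta b\cdot h$ by axiom~2; the same instance of axiom~2 gives naturality of $\eta$ with respect to this $T$. Then define $\mu a:=(1_{Ta})^{\mathbbm{T}}\colon TTa\to Ta$ and verify the remaining monad laws. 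In every case one applies axiom~3 with one of its two arguments taken to be an identity $1_{Ta}$ (so that its extension is $\mu$) or of the form $\eta(Ta)\cdot(-)$, and then collapses a unit using the ``triangle'' $\mu\cdot\eta T=1$, which is itself the special case of axiom~2 at $1_{Ta}$. Concretely: naturality of $\mu$ holds because both $Th\cdot\mu a$ and $\mu b\cdot TTh$ are computed by axiom~3 to equal $(Th)^{\mathbbm{T}}$; the left unit law is axiom~2 at $1_{Ta}$; and the right unit law and associativity are obtained by applying axiom~3 at the pairs $(1_{Ta},\eta(Ta)\cdot\eta a)$ and at $(1_{Ta},1_{TTa})$ together with the computation of $\mu a\cdot T\mu a$ by axiom~3, both sides in the latter case reducing to $(\mu a)^{\mathbbm{T}}$.

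\emph{Mutual inverseness.} Going around starting from a monoidal monad, the reconstructed multiplication is $(1_{Ta})^{\mathbbm{T}}=\mu a\cdot T1_{Ta}=\mu a$ and the reconstructed functor action is $(\eta b\cdot h)^{\mathbbm{T}}=\mu b\cdot T\eta b\cdot Th=Th$ by the right unit law. Going around starting from an extensive monad, for $f\colon a\to Tb$ the reconstructed extension is $\mu b\cdot Tf=(1_{Tb})^{\mathbbm{T}}\cdot(\eta(Tb)\cdot f)^{\mathbbm{T}}=\bigl((1_{Tb})^{\mathbbm{T}}\cdot\eta(Tb)\cdot f\bigr)^{\mathbbm{T}}=(1_{Tb}\cdot f)^{\mathbbm{T}}=f^{\mathbbm{T}}$, using axiom~3 and then axiom~2; since $T$ on objects and $\eta$ are untouched throughout, this closes the bijection. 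The only place real care is needed is the extensive-to-monoidal direction, in manufacturing the functoriality of $T$ from the bare extension data and in checking naturality of $\mu$ and associativity; but none of this is deep, and I would organise the write-up so that the triangle $\mu\cdot\eta T=1$ is extracted once and then reused.
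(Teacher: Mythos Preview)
Your proof is correct and follows exactly the same approach as the paper: the two passages $f^{\mathbbm{T}}:=\mu b\cdot Tf$ and $Tf:=(\eta b\cdot f)^{\mathbbm{T}}$, $\mu a:=(1_{Ta})^{\mathbbm{T}}$ are the ones the paper records. In fact your write-up is considerably more complete, since the paper's proof merely states these two formulas and leaves the verification of the axioms and of mutual inverseness to the reader.
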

\begin{proof}
Let $(T,\eta,\mu)$ be a monad on $A$ in monoidal form. Let $f:a\rightarrow Tb$ be an arrow in $A$. Define
$$f^\mathbbm{T}:=\mu b\circ Tf.$$

Conversely, let $(T,\eta,(-)^\mathbbm{T})$ be a monad on $A$ in extensive form. Let $f:a\rightarrow b$ be in $A$, and define
$$Tf:=(\eta b\circ f)^\mathbbm{T}\qquad\text{and}\qquad\mu a:=(1_{Ta})^\mathbbm{T}.$$
\end{proof}

In \citet{MW2010}, this situation is generalized, and there an extension system is defined, so that a monad in a 2-category may be defined alternatively in terms of an extension system. What we define as a left pasting operator is called \textit{a pasting operator} in \citet{MW2010}; the same for a left extension system. This change of nomenclature will be clearly justified below. Neither right pasting operators nor right extension systems are mentioned there.

\begin{definicion}[\citet{MW2010}]
Let $K$ be a 2-category. A \textit{left pasting operator}
$$(-)^\#:K(x,b)(1,s)\rightarrow K(x,c)(t,u)$$
is a family of functions
$$(-)^\#_{f,g}:K(x,b)(f,sg)\rightarrow K(x,c)(tf,ug)$$
that respects whiskering and blistering; i. e., $\vartheta^\# h=(\vartheta h)^\#$ and $(\vartheta\cdot\kappa)^\#=\vartheta^\#\cdot t\kappa$, where whiskering is
$$\xymatrix{
z\ar[r]^h & x\ar[rr]^f\ar[dr]_g\rrtwocell<\omit>{<3>\vartheta} & & b\\
 & & a\ar[ur]_s &
}$$
and blistering is
$$\xymatrix{
x\ar[rr]_f\ar[dr]_g\rrtwocell<\omit>{<4>\vartheta}\ar@/^1.7pc/[rr]\rrtwocell<\omit>{<-2>\kappa} & & b\\
& a\ar[ur]_s & .
}$$

Dually, a \textit{right pasting operator}
$$(-)^\#:K(b,y)(1,t)\rightarrow K(a,y)(s,u)$$
is a family of functions
$$(-)^\#_{f,g}:K(b,y)(f,gt)\rightarrow K(a,y)(fs,gu)$$
that respects whiskering and blistering; i. e., $h\vartheta^\#=(h\vartheta)^\#$ and $(\vartheta\cdot\kappa)^\#=\vartheta^\#\cdot\kappa s$, where whiskering is
$$\xymatrix{
b\ar[rr]^f\ar[dr]_t\rrtwocell<\omit>{<3>\vartheta} & & y\ar[r]^h & z\\
& c\ar[ur]_g &
}$$
and blistering is
$$\xymatrix{
b\ar[rr]_f\ar[dr]_t\rrtwocell<\omit>{<4>\vartheta}\ar@/^1.7pc/[rr]\rrtwocell<\omit>{<-2>\kappa} & & y\\
& c\ar[ur]_g & .
}$$
\end{definicion}

\begin{lema}[\citet{MW2010}]\label{L:2cellslandrpastingoperators}
Let $K$ be a 2-category. For arrows in $K$ configured as in the following diagram
$$\xymatrix{
& b\ar[dr]^t &\\
a\ar[ur]^s\ar[rr]_u & & c,
}$$
left pasting operators
$$K(x,b)(1,s)\rightarrow K(x,c)(t,u)$$
are in bijective correspondence with 2-cells $ts\Rightarrow u$.

Dually, right pasting operators
$$(-)^\#:K(b,y)(1,t)\rightarrow K(a,y)(s,u)$$
are in bijective correspondence with 2-cells $ts\Rightarrow u$.
\end{lema}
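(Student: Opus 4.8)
The plan is to exhibit the claimed bijection explicitly in both directions and then verify the two round trips; I carry out the argument for left pasting operators, the right case being entirely dual. In one direction, given a 2-cell $\alpha\colon ts\Rightarrow u$, I define, for each object $x$ and each pair $f\colon x\to b$, $g\colon x\to a$, a function $K(x,b)(f,sg)\to K(x,c)(tf,ug)$ by $\vartheta\mapsto\vartheta^\#:=(\alpha g)\cdot(t\vartheta)$. That this family respects whiskering and blistering is a one-line check with the interchange law: $(\vartheta h)^\#=(\alpha gh)\cdot(t\vartheta h)=\bigl((\alpha g)\cdot(t\vartheta)\bigr)h=\vartheta^\# h$, and $(\vartheta\cdot\kappa)^\#=(\alpha g)\cdot(t\vartheta)\cdot(t\kappa)=\vartheta^\#\cdot t\kappa$, so we obtain a left pasting operator. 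In the other direction, given a left pasting operator $(-)^\#$, I specialise to $x=a$, $f=s$, $g=1_a$: since $s1_a=s$ and $u1_a=u$, the identity $1_s$ lies in $K(a,b)(s,s1_a)$ and $(1_s)^\#$ lies in $K(a,c)(ts,u)$, so I set $\alpha:=(1_s)^\#\colon ts\Rightarrow u$.

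One round trip is immediate: starting from $\alpha\colon ts\Rightarrow u$, forming $(-)^\#$ and then $(1_s)^\#$ returns $(\alpha\cdot1_a)\cdot(t\cdot1_s)=\alpha\cdot1_{ts}=\alpha$. For the other, start from a left pasting operator $(-)^\#$, put $\alpha:=(1_s)^\#$, and let $(-)^{\#'}$ denote the operator rebuilt from $\alpha$; I must show $\vartheta^{\#'}=\vartheta^\#$ for every $\vartheta\colon f\Rightarrow sg$. Respect for whiskering (with the arrow $g\colon x\to a$) gives $\alpha g=(1_s)^\#g=(1_s g)^\#=(1_{sg})^\#$; respect for blistering, applied to the composable pair consisting of $1_{sg}\colon sg\Rightarrow sg$ and $\vartheta\colon f\Rightarrow sg$, then gives $\vartheta^\#=(1_{sg}\cdot\vartheta)^\#=(1_{sg})^\#\cdot(t\vartheta)=(\alpha g)\cdot(t\vartheta)=\vartheta^{\#'}$. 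Since equality of pasting operators just means equality of all component functions, the two constructions are mutually inverse, which proves the left-hand statement.

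For right pasting operators one uses the \emph{same} 2-cell $\alpha\colon ts\Rightarrow u$, now sending $\vartheta\colon f\Rightarrow gt$ to $(g\alpha)\cdot(\vartheta s)\colon fs\Rightarrow gu$ and recovering $\alpha$ as $(1_t)^\#$ from the instance $y=c$, $g=1_c$, $f=t$; the computations are the word-for-word duals of the ones above (whiskering on the $y$-side, blistering against $\vartheta$ on the right). I expect the only genuine point to watch is the second round trip: one must notice that an arbitrary $\vartheta\colon f\Rightarrow sg$ factors as $1_{sg}\cdot\vartheta$ and that $1_{sg}$ is itself the whiskering $1_s g$, so that the value $\vartheta^\#$ is forced by the single value $(1_s)^\#$ together with the two structural axioms. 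Everything else is routine two-dimensional bookkeeping.
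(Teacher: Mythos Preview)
Your proof is correct and follows essentially the same approach as the paper: the paper also extracts the 2-cell as $(1_s)^\#$ and establishes the key identity $\vartheta^\# = (1_s)^\# g \cdot t\vartheta$ (your second round trip) by whiskering and blistering, then remarks that this determines the operator completely while $(1_s)^\#$ can be any 2-cell $ts\Rightarrow u$. You simply spell out more of the routine verifications (that $\vartheta\mapsto(\alpha g)\cdot t\vartheta$ really is a pasting operator, and the trivial round trip), which the paper leaves implicit.
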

\begin{proof}
Given a left pasting operator
$$(-)^\#:K(x,b)(1,s)\rightarrow K(x,c)(t,u),$$
we have the 2-cell
$$(1_s)^\#_{s,1_a}:ts\Rightarrow u.$$
Any 2-cell $\vartheta:f\Rightarrow sg:x\rightarrow b$ arises by whiskering $1_s$ at $x$ by $g$ and by blistering the result at $sg$ by $\vartheta$; i. e.,
\begin{equation}\label{E:whiskblist}
\vartheta^\#=(1_s)^\#g\cdot t\vartheta;
\end{equation}
diagrammatically,
$$\xymatrix{
x\ar[rr]^f\ar[dr]_g\rrtwocell<\omit>{<3>\vartheta} & & b\ar@{}|{=}[r] &
x\ar[r]_g\ar@/^2pc/[rrr]^f & a\ar[rr]^s\ar[dr]_{1_a}\rrtwocell<\omit>{<3>1_s}\rtwocell<\omit>{<-2.5>\vartheta} & & b \\
& a\ar[ur]_s & &
& & a\ar[ur]_s &.
}$$
That is, every $K(x,b)(1,s)\rightarrow K(x,c)(t,u)$ is completely determined by $(1_s)^\#_{s,1_a}$, and the latter can be any 2-cell $ts\Rightarrow u$. It follows that the assignment $(-)^\#\mapsto(1_s)^\#_{s,1_a}$ is a bijection.

Dually, the assignment $(-)^\#\mapsto(1_t)^\#_{t,1_c}$ for a right pasting operator $(-)^\#$ is a bijection, whence we get the other correspondence. The dual equation to \eqref{E:whiskblist} is
\begin{equation}\label{E:whiskblistright}
\varkappa^\#=k(1_t)^\#\cdot\varkappa s
\end{equation}
for $\varkappa:h\Rightarrow kt:b\rightarrow y$.
\end{proof}

\begin{definicion}[\citet{MW2010}]
Let $K$ be a 2-category and $a\in K$ be an object of $K$. A \textit{left extension system on $a$} consists of an arrow $s:a\rightarrow a$, a 2-cell $\eta:1_a\Rightarrow s$ and a left pasting operator
$$(-)^\mathbbm{s}:K(x,a)(1,s)\rightarrow K(x,a)(s,s),$$
that we call the \textit{left $\mathbbm{s}$-extension operator}, such that
\begin{enumerate*}
 \item $\eta^\mathbbm{s}=1_s$,
 \item for every $\vartheta:f\Rightarrow sg:x\rightarrow a$ the diagram
 $$\xymatrix{
 f\ar[r]^{\eta f}\ar[dr]_\vartheta & sf\ar[d]^{\vartheta^\mathbbm{s}}\\
 & sg
 }$$
 commutes,
 \item for every $\vartheta:f\Rightarrow sg:x\rightarrow a$ and every $\kappa:g\Rightarrow sh:x\rightarrow a$ the diagram
 $$\xymatrix{
 sf\ar[r]^{\vartheta^\mathbbm{s}}\ar[dr]_{(\kappa^\mathbbm{s}\vartheta)^\mathbbm{s}} & sg\ar[d]^{\kappa^\mathbbm{s}}\\
 & sh
 }$$
 commutes.
\end{enumerate*}

A \textit{right extension system on $a$} consists of an arrow $s:a\rightarrow a$, a 2-cell $\eta:1_a\Rightarrow s$ and a right pasting operator
$$(-)^\mathbbm{s}:K(a,y)(1,s)\rightarrow K(a,y)(s,s),$$
that we call the \textit{right $\mathbbm{s}$-extension operator}, such that
\begin{enumerate*}
 \item $\eta^\mathbbm{s}=1_s$,
 \item for every $\vartheta:f\Rightarrow gs:a\rightarrow y$ the diagram
 $$\xymatrix{
 f\ar[r]^{f\eta}\ar[dr]_\vartheta & fs\ar[d]^{\vartheta^\mathbbm{s}}\\
 & gs
 }$$
 commutes,
 \item for every $\vartheta:f\Rightarrow gs:a\rightarrow y$ and every $\kappa:g\Rightarrow hs:a\rightarrow y$ the diagram
 $$\xymatrix{
 fs\ar[r]^{\vartheta^\mathbbm{s}}\ar[dr]_{(\kappa^\mathbbm{s}\vartheta)^\mathbbm{s}} & gs\ar[d]^{\kappa^\mathbbm{s}}\\
 & hs
 }$$
 commutes.
\end{enumerate*}
\end{definicion}

\begin{teorema}[\citet{MW2010}]
For a 2-cell $\eta:1_a\Rightarrow s:a\rightarrow a$ in a 2-category $K$, there is a bijection between (right) left extension systems $(s,\eta,(-)^\mathbbm{s})$ and monads $(s,\eta,\mu)$.
\end{teorema}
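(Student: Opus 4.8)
The plan is to deduce this from Lemma~\ref{L:2cellslandrpastingoperators}, generalising the Proposition above to a 2-category. Fix the 2-cell $\eta:1_a\Rightarrow s$. A left extension system on $a$ with this unit is precisely a left pasting operator $(-)^\mathbbm{s}:K(x,a)(1,s)\rightarrow K(x,a)(s,s)$ subject to the three listed axioms, whereas a monad $(s,\eta,\mu)$ with this underlying endo-1-cell and unit is precisely a 2-cell $\mu:ss\Rightarrow s$ subject to the two unit laws and the associativity law. By Lemma~\ref{L:2cellslandrpastingoperators} (with $b=c=a$ and $t=u=s$), left pasting operators of the required shape correspond bijectively to 2-cells $ss\Rightarrow s$: an operator $(-)^\mathbbm{s}$ determines the 2-cell $\mu:=(1_s)^\mathbbm{s}_{s,1_a}$, and conversely a 2-cell $\mu$ determines the operator sending $\vartheta:f\Rightarrow sg$ to $\vartheta^\mathbbm{s}:=\mu g\cdot s\vartheta$, by \eqref{E:whiskblist}. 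It therefore remains only to check that, under this correspondence, the three extension-system axioms match the three monad laws.

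I would verify this axiom by axiom, using the formula $\vartheta^\mathbbm{s}=\mu g\cdot s\vartheta$ together with the interchange law. Axiom~1, namely $\eta^\mathbbm{s}=1_s$, turns into $\mu\cdot s\eta=1_s$ on applying the formula with $f=g=1_a$ and $\vartheta=\eta$; this is one of the unit laws. For Axiom~2, namely $\vartheta^\mathbbm{s}\cdot\eta f=\vartheta$ for every $\vartheta:f\Rightarrow sg$, the naturality identity $s\vartheta\cdot\eta f=\eta(sg)\cdot\vartheta$ rewrites the left side as $\mu g\cdot\eta(sg)\cdot\vartheta$; evaluating at $\vartheta=1_s$ (with $g=1_a$) extracts $\mu\cdot\eta s=1_s$, while conversely whiskering that equation by $g$ recovers Axiom~2, so Axiom~2 is equivalent to the other unit law. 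For Axiom~3, namely $\kappa^\mathbbm{s}\cdot\vartheta^\mathbbm{s}=(\kappa^\mathbbm{s}\vartheta)^\mathbbm{s}$ for all $\vartheta:f\Rightarrow sg$ and $\kappa:g\Rightarrow sh$, the left side expands (using the naturality identity $s\kappa\cdot\mu g=\mu(sh)\cdot ss\kappa$) to $\mu h\cdot\mu(sh)\cdot ss\kappa\cdot s\vartheta$ and the right side expands (using functoriality of whiskering by $s$) to $\mu h\cdot(s\mu)h\cdot ss\kappa\cdot s\vartheta$; taking $\vartheta$ and $\kappa$ to be suitable identities leaves $\mu h\cdot\mu(sh)=\mu h\cdot(s\mu)h$, and since $\mu(sh)=(\mu s)h$ this says $(\mu\cdot\mu s)h=(\mu\cdot s\mu)h$; evaluating at $h=1_a$, and conversely whiskering by $h$, shows Axiom~3 is equivalent to the associativity law $\mu\cdot\mu s=\mu\cdot s\mu$. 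Composing the three equivalences gives the claimed bijection between left extension systems $(s,\eta,(-)^\mathbbm{s})$ and monads $(s,\eta,\mu)$; the case $K=\Cat$, $a=A$ recovers the Proposition above.

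The right-handed statement is proved dually: one uses the dual formula \eqref{E:whiskblistright}, which here sends a 2-cell $\mu:ss\Rightarrow s$ to the operator $\vartheta\mapsto\vartheta^\mathbbm{s}:=g\mu\cdot\vartheta s$ for $\vartheta:f\Rightarrow gs:a\rightarrow y$, and right whiskering of 2-cells replaces left whiskering throughout the verification; the three right-extension axioms again match the two unit laws and the associativity law.

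I expect the only real bookkeeping to lie in Axiom~3: one must apply the interchange law to $\mu$ in the right direction, keep the three whiskerings $s\mu$, $\mu s$ and $\mu(sh)=(\mu s)h$ distinct, and make sure that the identity quantified over all $\vartheta,\kappa$ genuinely collapses to the single associativity equation — which it does because identity 2-cells are legitimate test inputs and whiskering recovers the general instance from the bare one. Axioms~1 and~2 are routine once the interchange/naturality moves are in place.
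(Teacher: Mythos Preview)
Your proof is correct and follows the same approach as the paper: both invoke Lemma~\ref{L:2cellslandrpastingoperators} to reduce the problem to a bijection between pasting operators and 2-cells $ss\Rightarrow s$, with the same explicit formulas $\mu:=(1_s)^\mathbbm{s}$ and $\vartheta^\mathbbm{s}:=\mu g\cdot s\vartheta$ (dually $\varkappa^\mathbbm{s}:=k\mu\cdot\varkappa s$). The paper's proof is considerably terser and omits the axiom-by-axiom verification that you supply, but the strategy is identical.
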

\begin{proof}
By Lemma~\ref{L:2cellslandrpastingoperators}, there is a bijection between (right) left pasting operators $(-)^\mathbbm{s}$ and 2-cells $\mu:ss\Rightarrow s$. So, let $(s,\eta,\mu)$ be a monad on $a$ in $K$. The correspondence in Lemma~\ref{L:2cellslandrpastingoperators} yields
$$\vartheta^\mathbbm{s}:=\mu g\cdot s\vartheta$$
for $\vartheta:f\Rightarrow sg:x\rightarrow a$ and
$$\varkappa^\mathbbm{s}:=k\mu\cdot\varkappa s$$
for $\varkappa:h\Rightarrow sk:a\rightarrow y$.

Conversely, let $(s,\eta,(-)^\mathbbm{s})$ be a (right) left extension system on $a$ in $K$. Then, Lemma~\ref{L:2cellslandrpastingoperators} yields
$$\mu:=(1_s)^\mathbbm{s}.$$
\end{proof}

In \citet{MRW2002}, there is a theorem (Proposition 3.5) establishing a bijective correspondence between distributive laws of a monad $s$ over a monad $t$ in a 2-category $K$ and $s$-algebras $\alpha:sts\rightarrow s$ satisfying the commutativity of certain diagrams. More precisely:

\begin{proposicion}[\citet{MRW2002}]
Given two monads $(s,\eta',\mu')$ and $(t,\eta,\mu)$ on $a$ in a 2-category $K$, there is a bijective correspondence between distributive laws $\lambda:st\Rightarrow ts$ of $(s,\eta',\mu')$ over $(t,\eta,\mu)$ and $s$-algebras $\alpha:sts\Rightarrow ts$ that satisfy the commutativity of the following diagrams:
$$\xymatrix{
sts^2\ar[r]^{st\mu'}\ar[d]_{\alpha s} & sts\ar[d]^\alpha &
s^2\ar[r]^{s\eta s}\ar[d]_{\mu'} & sts\ar[d]^\alpha &
st^2s\ar[r]^{st\eta'ts}\ar[d]_{s\mu s} & ststs\ar[r]^{\alpha ts} & tsts\ar[r]^{t\alpha} & t^2s\ar[d]^{\mu s}\\
ts^2\ar[r]_{t\mu'} & ts &
s\ar[r]_{\eta s} & ts &
sts\ar[rrr]_\alpha & & & ts
}$$
given by
$$\xymatrix{
\lambda\ar@{|->}[r] & sts\ar[r]^{\lambda s} & ts^2\ar[r]^{t\mu'} & ts
}$$
and inverse given by
$$\xymatrix{
\alpha\ar@{|->}[r] & st\ar[r]^{st\eta'} & sts\ar[r]^\alpha & ts.
}$$
\end{proposicion}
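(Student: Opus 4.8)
\noindent\emph{Plan of proof.} I would verify directly that the two displayed assignments,
$$\Phi\colon\lambda\longmapsto t\mu'\cdot\lambda s\qquad\text{and}\qquad\Psi\colon\alpha\longmapsto\alpha\cdot st\eta',$$
are well defined and mutually inverse. Conceptually, $\Phi(\lambda)$ is the $s$-algebra structure that the lift of the monad $t$ to $s$-algebras puts on the free $s$-algebra $(s,\mu')$, and the three displayed diagrams encode exactly the compatibility of that lift with the right $s$-action $t\mu'$ on $ts$ (first diagram) and with the unit $\eta$ and multiplication $\mu$ of $t$ (second and third diagrams). Every step reduces to the monad axioms for $s$ and $t$, the interchange law of $K$, and --- in the direction producing distributive laws --- the three displayed diagrams; I would track the whiskerings with string diagrams.

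I would begin with the two ``inverse'' identities, which are short. For $\Psi\Phi=\mathrm{id}$: interchange gives $\lambda s\cdot st\eta'=ts\eta'\cdot\lambda$, so $\Psi(\Phi(\lambda))=t\mu'\cdot\lambda s\cdot st\eta'=t\mu'\cdot ts\eta'\cdot\lambda=t(\mu'\cdot s\eta')\cdot\lambda=\lambda$ by the right unit law $\mu'\cdot s\eta'=1_s$. For $\Phi\Psi=\mathrm{id}$: $\Phi(\Psi(\alpha))=t\mu'\cdot\alpha s\cdot st\eta's$; the first displayed diagram rewrites $t\mu'\cdot\alpha s$ as $\alpha\cdot st\mu'$, and $st\mu'\cdot st\eta's=1_{sts}$ by the left unit law $\mu'\cdot\eta's=1_s$, so $\Phi(\Psi(\alpha))=\alpha$. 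I would record the resulting identity $t\mu'\cdot(\Psi\alpha)s=\alpha$ of $2$-cells $sts\Rightarrow ts$ --- which needs only the first displayed diagram and the left unit law --- for reuse below.

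Next I would check that $\Phi$ takes values in the stated class. With $\alpha:=t\mu'\cdot\lambda s$: the $s$-algebra unit law follows from the $\eta't$-axiom of $\lambda$ and the left unit law; the $s$-algebra associativity follows from the $\mu't$-axiom of $\lambda$, associativity of $\mu'$, and interchange; the first displayed diagram needs only associativity of $\mu'$ and interchange; the second displayed diagram follows from the $s\eta$-axiom of $\lambda$ and interchange; the third displayed diagram follows from the $s\mu$-axiom of $\lambda$ and interchange. Conversely, to see that $\lambda:=\Psi(\alpha)$ is a distributive law: the $\eta't$-axiom follows from the $s$-algebra unit law and interchange; the $s\eta$-axiom from the second displayed diagram, the right unit law, and interchange; the $\mu't$-axiom from the $s$-algebra associativity, the recorded identity $t\mu'\cdot(\Psi\alpha)s=\alpha$, and interchange; the $s\mu$-axiom from the third displayed diagram and interchange.

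The step I expect to be the main obstacle is the bookkeeping in the two arguments involving the multiplication $\mu$ of $t$ --- the third displayed diagram and the $s\mu$-axiom of a distributive law --- where the pasting in play is the largest (a $2$-cell $st^2s\Rightarrow ts$, respectively $st^2\Rightarrow ts$) and several whiskerings of $\lambda$ (or $\alpha$) and of $\mu,\mu',\eta'$ must be slid past one another. I would handle each by drawing the corresponding string diagram, substituting the definition of $\alpha$ (respectively of $\Psi(\alpha)$), applying the $s\mu$-axiom of $\lambda$ (respectively the third displayed diagram) exactly once, and reorganizing the unit-insertions with interchange. Each remaining verification is short once the interchange rewrites are spelled out, so the argument is self-contained given the usual definition of a distributive law in a $2$-category.
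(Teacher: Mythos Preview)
The paper does not give its own proof of this proposition: it is stated with attribution to \citet{MRW2002} (their Proposition~3.5) and left unproved, serving only as input for the subsequent discussion of no-iteration distributive laws. So there is nothing in the paper to compare your argument against beyond the bare statement.

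Your plan is nonetheless sound and would constitute a complete proof. The two round-trip identities are exactly as you say: $\Psi\Phi(\lambda)=t\mu'\cdot ts\eta'\cdot\lambda=\lambda$ by the right unit law, and $\Phi\Psi(\alpha)=\alpha\cdot st\mu'\cdot st\eta's=\alpha$ using the first displayed diagram and the left unit law. The well-definedness checks go through with the ingredients you list; in particular, for the $s\mu$-axiom of $\Psi(\alpha)$ one rewrites $\mu s\cdot t\lambda\cdot\lambda t$ as $\mu s\cdot t\alpha\cdot\alpha ts\cdot st\eta'ts\cdot stt\eta'$ via two interchange moves, applies the third displayed diagram to collapse the first four factors to $\alpha\cdot s\mu s$, and then one more interchange gives $\alpha\cdot st\eta'\cdot s\mu=\lambda\cdot s\mu$, which is the step you flagged as the main obstacle. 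The conceptual gloss you give---that $\Phi(\lambda)$ is the $s$-algebra structure on the free algebra $(s,\mu')$ coming from the lift of $t$, and that the three diagrams express right $s$-linearity and compatibility with $\eta,\mu$---is the reading in \citet{MRW2002} as well, so your approach is the standard one even though the present paper omits it.
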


In \citet{MW2010}, algebras for a monad $(s,\eta',(-)^\mathbbm{s})$ in extensive form are defined too; they are also in extensive form. It is shown there that the category of algebras with domain $x$ of $(s,\eta',(-)^\mathbbm{s})$ and the category of algebras with domain $x$ of the monad $(s,\eta',\mu')$ corresponding to $(s,\eta',(-)^\mathbbm{s})$ are isomorphic.

Using this and the previous theorem, \citet{MW2010} gives a theorem establishing a bijective correspondence:
\begin{teorema}[\cite{MW2010}]\label{T:bijdistlawalg}
Let $K$ be a 2-category, and $a\in K$. Let $(s,\eta',(-)^\mathbbm{s})$ and $(t,\eta,(-)^\mathbbm{t})$ be left extension systems on $a$. A distributive law $\lambda$ of $(s,\eta',\mu')$ over $(t,\eta,\mu)$,  the corresponding monads in monoidal form to $s$ and $t$, resp., can be given as an $\mathbbm{s}$-algebra $(ts,(-)^{\bm{\lambda}})$ such that for every $\gamma:g\Rightarrow sh:x\rightarrow a$ and $\vartheta:f\Rightarrow tsg:x\rightarrow a$ the diagram
$$\xymatrix{
sf\ar[r]^{\vartheta^{\bm{\lambda}}}\ar[dr]_{(t\gamma^\mathbbm{s}\cdot\vartheta)^{\bm{\lambda}}} & tsg\ar[d]^{t\gamma^\mathbbm{s}} \\
& tsh
}$$
commutes,
$$(t\eta'\cdot\eta)^{\bm{\lambda}}=\eta s$$
and for every $\zeta:k\Rightarrow tsu:y\rightarrow a$ and every $\kappa:u\Rightarrow tsv:y\rightarrow a$ the diagram
$$\xymatrix{
sk\ar[r]^{\zeta^{\bm{\lambda}}}\ar[dr]_{((\kappa^{\bm{\lambda}})^\mathbbm{t}\cdot\zeta)^{\bm{\lambda}}} & tsu\ar[d]^{(\kappa^{\bm{\lambda}})^\mathbbm{t}} \\
& tsv
}$$
commutes.
\end{teorema}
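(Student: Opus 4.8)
The plan is to obtain the claimed bijection as the composite of two bijections that are already in hand, and then to check that the remaining side conditions match up term by term. By Proposition~3.5 of \citet{MRW2002}, quoted above, the distributive laws $\lambda:st\Rightarrow ts$ of $(s,\eta',\mu')$ over $(t,\eta,\mu)$ are in bijection with the $s$-algebras $\alpha:sts\Rightarrow ts$ satisfying its three displayed diagrams; and by the isomorphism of \citet{MW2010} between $s$-algebras in monoidal form and $s$-algebras in extensive form, taken with carrier $ts:a\rightarrow a$, such $\alpha$ are in bijection with the extensive-form $\mathbbm{s}$-algebra structures $(-)^{\bm{\lambda}}$ on $ts$. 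Concretely, to $\lambda$ one assigns $\alpha:=t\mu'\cdot\lambda s$, and then, by Lemma~\ref{L:2cellslandrpastingoperators} together with \eqref{E:whiskblist}, the left pasting operator $(-)^{\bm{\lambda}}:K(x,a)(1,ts)\rightarrow K(x,a)(s,ts)$ with $\vartheta^{\bm{\lambda}}:=\alpha g\cdot s\vartheta$ for $\vartheta:f\Rightarrow tsg$; conversely, $(-)^{\bm{\lambda}}$ is sent to $\alpha:=(1_{ts})^{\bm{\lambda}}$ and then to $\lambda:=\alpha\cdot st\eta'=(t\eta')^{\bm{\lambda}}$. Since each of these two steps is already a bijection onto the relevant class of structures, all that is left is to verify that, under the composite, being an $s$-algebra corresponds to being an $\mathbbm{s}$-algebra (this is precisely the \citet{MW2010} algebra isomorphism restricted to the carrier $ts$) and that the three diagrams of \citet{MRW2002} correspond, one by one, to the first diagram, to the equation $(t\eta'\cdot\eta)^{\bm{\lambda}}=\eta s$, and to the third diagram of the present statement.

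I would carry out these three translations in turn, reducing each to the generators. Lemma~\ref{L:2cellslandrpastingoperators} and \eqref{E:whiskblist} give $\vartheta^{\bm{\lambda}}=\alpha g\cdot s\vartheta$; the quoted correspondence between extension systems and monads gives $\gamma^{\mathbbm{s}}=\mu'h\cdot s\gamma$ for $\gamma:g\Rightarrow sh$ and $\varkappa^{\mathbbm{t}}=\mu q\cdot t\varkappa$ for $\varkappa:p\Rightarrow tq$. Substituting these, collapsing the whiskered composites by the interchange law and the naturality of $\alpha$, and reading off the corresponding \citet{MRW2002} square is a routine calculation: it sends the first diagram to $\alpha\cdot st\mu'=t\mu'\cdot\alpha s$, sends $(t\eta'\cdot\eta)^{\bm{\lambda}}=\eta s$ to $\alpha\cdot s\eta s=\eta s\cdot\mu'$ (equivalently, to the unit-on-$t$ axiom $\lambda\cdot s\eta=\eta s$ of the distributive law), and sends the third diagram to the long hexagon of \citet{MRW2002}. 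Conversely, a left pasting operator is determined by its values on the generators, so it is enough to check these three translated conditions on the universal 2-cells $1_{ts}$ and on the unit 2-cells built from $\eta$ and $\eta'$, and then to propagate the equalities along \eqref{E:whiskblist} and its dual \eqref{E:whiskblistright}.

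The genuine obstacle is the third condition. There $(-)^{\bm{\lambda}}$ appears twice and nested --- once inside $(\kappa^{\bm{\lambda}})^{\mathbbm{t}}$ and once again applied to $(\kappa^{\bm{\lambda}})^{\mathbbm{t}}\cdot\zeta$ --- so unwinding it needs two applications of $\vartheta^{\bm{\lambda}}=\alpha g\cdot s\vartheta$ interleaved with one application of $\varkappa^{\mathbbm{t}}=\mu q\cdot t\varkappa$, and the resulting string of whiskered 2-cells must be reconciled with the composite $\mu s\cdot t\alpha\cdot\alpha ts\cdot st\eta'ts$ that runs along the top of the \citet{MRW2002} hexagon. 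I would keep this under control by evaluating both sides at the universal 2-cells, where the bookkeeping stays finite, and then appealing to \eqref{E:whiskblist} for the general case; the only point that needs care is that these side conditions --- unlike the $\mathbbm{s}$-algebra axioms, which are stable under whiskering by construction --- are not freely generated, so the reduction to generators must be made explicit rather than assumed. Once the hexagon is dealt with, the first diagram and the unit equation are easier instances of the same manipulation, and pasting the three resulting equivalences onto the two quoted bijections gives the theorem.
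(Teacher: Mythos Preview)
The paper does not supply its own proof of this theorem; it is quoted from \citet{MW2010}, and the only argument the paper offers is the sentence immediately preceding the statement: the result is obtained by composing the quoted proposition of \citet{MRW2002} with the \citet{MW2010} isomorphism between $s$-algebras in monoidal form and $\mathbbm{s}$-algebras in extensive form (taken at the carrier $ts$). Your proposal implements exactly this composite-of-bijections strategy and then translates the three \citet{MRW2002} side conditions into the three conditions of the statement, so it matches the approach the paper attributes to \citet{MW2010}.
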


\section{No-iteration monad morphisms}\label{S:2}

\noindent Proposition 2.2.6 in \citet{MM2007} provides a bijective correspondence involving monad $\Kl$-morphims:

\begin{proposicion}[\citet{MM2007}]
Kleisli liftings $\bar{F}:C_H\rightarrow D_K$ are in bijective correspondence with families $\lambda_a:FHa\rightarrow KFa$ satisfying the commutativity of
$$\xymatrix{
& FHa\ar[dd]^{\lambda_a} \\
Fa\ar[ur]^{F\eta_Ha}\ar[dr]_{\eta_KFa} & \\
& KFa
}$$
and of
$$\xymatrix{
FHa\ar[r]^{Ff^\mathbbm{H}}\ar[d]_{\lambda_a} & FHb\ar[d]^{\lambda_b} \\
KFa\ar[r]_{(\lambda_b\circ f)^\mathbbm{K}} & KFb
}$$
for $f:a\rightarrow Hb$, where $(-)^\mathbbm{H}$ is the extension operation of the monad $H$ and $(-)^\mathbbm{K}$ is the extension operation of the monad $K$.
\end{proposicion}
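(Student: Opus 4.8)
The plan is to write down both directions of the claimed bijection explicitly and then check they are mutually inverse, using only the axioms of a monad in extensive form (Definition~\ref{D:extensiveform}) together with the defining property of a Kleisli lifting: a functor $\bar F\colon C_H\to D_K$ is a lifting of $F$ precisely when it agrees with $F$ on objects and satisfies $\bar F J_H=J_K F$, where $J_H\colon C\to C_H$ and $J_K\colon D\to D_K$ are the canonical functors. First I would fix notation. For $f\in C(a,Hb)$, write $\hat f\colon a\to b$ for the corresponding arrow of $C_H$; composition in $C_H$ is $\hat g\circ_H\hat f=\widehat{g^\mathbbm{H}\circ f}$, the identity of $a$ is $\widehat{\eta_H a}=J_H(1_a)$, and I abbreviate $p_a:=\widehat{1_{Ha}}\colon Ha\to a$; likewise in $D_K$. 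A last preliminary: the lower arrow of the second square in the statement, written $(\lambda_b\circ f)^\mathbbm{K}$, is to be read as $(\lambda_b\circ Ff)^\mathbbm{K}\colon KFa\to KFb$, the functor $F$ being suppressed.

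With this in place, the two passages are these: from a lifting $\bar F$, set $\lambda_a:=\bar F(p_a)\colon FHa\to KFa$; from a family $(\lambda_a)$, let $\bar F$ be $F$ on objects and $\bar F(\hat f):=\lambda_b\circ Ff$, regarded as an arrow of $D_K$, for $f\in C(a,Hb)$. The computational tool behind everything is the identity
$$\hat f=p_b\circ_H J_H f\qquad(f\in C(a,Hb)),$$
which is immediate from axiom~(2) for $H$ applied to $1_{Hb}$, together with its consequences $\widehat{\eta_H a}=p_a\circ_H J_H(\eta_H a)$ and $\widehat{f^\mathbbm{H}}=\hat f\circ_H p_a=p_b\circ_H J_H(f^\mathbbm{H})$. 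The round trip $\lambda\mapsto\bar F\mapsto\lambda$ is immediate, since $\bar F(p_a)=\lambda_a\circ F1_{Ha}=\lambda_a$. For $\bar F\mapsto\lambda\mapsto\bar F$, apply $\bar F$ to $\hat f=p_b\circ_H J_H f$ and use $\bar F J_H=J_K F$ to obtain $\bar F(\hat f)=\bar F(p_b)\circ_K J_K(Ff)$; expanding this composite in $D_K$ and applying axiom~(2) for $K$ to the arrow $\lambda_b=\bar F(p_b)$ collapses it to $\lambda_b\circ Ff$, the value prescribed by the reconstructed family, so the original $\bar F$ is recovered.

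It then remains to verify that each passage lands in the class claimed. That the functor built from a family $(\lambda_a)$ satisfying the two conditions is genuinely a Kleisli lifting is routine: the equality $\bar F J_H=J_K F$ (trivial on objects, a one-line check on arrows) and preservation of identities both reduce to the triangle condition $\lambda_b\circ F\eta_H b=\eta_K Fb$, while preservation of composition reduces to the square condition applied to the arrow $g\colon b\to Hc$, precomposed with $Ff$. Conversely, that the family $\lambda_a=\bar F(p_a)$ coming from a lifting $\bar F$ satisfies the two conditions follows by applying $\bar F$ to $\widehat{\eta_H a}=p_a\circ_H J_H(\eta_H a)$ and to $\widehat{f^\mathbbm{H}}=\hat f\circ_H p_a=p_b\circ_H J_H(f^\mathbbm{H})$, using $\bar F J_H=J_K F$, the formula $\bar F(\hat f)=\lambda_b\circ Ff$ just established, and axiom~(2) for $K$ to evaluate the resulting $D_K$-composites; the triangle and the square then drop out on comparing the two expressions. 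I do not anticipate a genuine obstacle here; the one thing demanding care is bookkeeping---keeping straight whether a given arrow is being viewed in $C$ or in $C_H$ (respectively $D$ or $D_K$), and citing the correct monad axiom at each step.
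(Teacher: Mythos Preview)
The paper does not supply its own proof of this proposition; it is quoted from \citet{MM2007} (their Proposition~2.2.6) and used as motivation for the 2-categorical generalisation that follows. Your argument is correct and is the standard one: setting $\lambda_a:=\bar F(\widehat{1_{Ha}})$ in one direction and $\bar F(\hat f):=\lambda_b\circ Ff$ in the other, the factorisation $\hat f=p_b\circ_H J_H f$ together with axiom~(2) of Definition~\ref{D:extensiveform} for $K$ does all the work, exactly as you describe. Your remark that the lower arrow of the square must be read as $(\lambda_b\circ Ff)^{\mathbbm K}$ rather than $(\lambda_b\circ f)^{\mathbbm K}$ is also correct---the paper itself writes the $F$ explicitly in the immediately following definition of a no-iteration monad $\Kl$-morphism in $\cat$, so the omission in the quoted statement is a typo.
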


If we paraphrase it in the context of a 2-category, we get the following.

\begin{proposicion}
Let $(c,s,\eta',\mu')$ and $(d,t,\eta,\mu)$ be two monads in a 2-category $K$. Let us call the triangle in the axioms for a monad $\Kl$-morphism \textit{$\Kl$-compatibility for units (KlU)}
$$\xymatrix{
& ft\ar[dd]^\kappa \\
f\ar[ur]^{f\eta}\ar[dr]_{\eta'f} & \\
& sf
}$$
and call the pentagon in those axioms \textit{$\Kl$-compatibility for multiplications (KlM)}
$$\xymatrix{
ftt\ar[r]^{\kappa t}\ar[d]_{f\mu} & sft\ar[r]^{s\kappa} & ssf\ar[d]^{\mu'f}\\
ft\ar[rr]_\kappa & & sf.
}$$
Then, there is a bijective correspondence between 2-cells $\kappa:ft\Rightarrow sf$ satisfying KlU, KlM and 2-cells $\kappa:ft\Rightarrow sf$ satisfying KlU and the commutativity of the diagram
$$\xymatrix{
ftg\ar[r]^{f\vartheta^{\mathbbm{t}}}\ar[d]_{\kappa g} & fth\ar[d]^{\kappa h} \\
sfg\ar[r]_{(\kappa h\cdot f\vartheta)^\mathbbm{s}} & sfh 
}$$
for every 2-cell $\vartheta:g\Rightarrow th:x\rightarrow d$ in $K$, where $(-)^\mathbbm{t}$ and $(-)^\mathbbm{s}$ are the left pasting operators corresponding to the monads $(d,t,\eta,\mu)$ and $(c,s,\eta',\mu')$, resp.
\end{proposicion}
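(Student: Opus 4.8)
The plan is to notice first that the asserted bijection is simply the identity on the underlying 2-cell $\kappa$: on both sides one is looking at 2-cells $\kappa:ft\Rightarrow sf$, axiom (KlU) occurs verbatim on each side, so the genuine content is that, for a fixed $\kappa$, axiom (KlM) holds if and only if the displayed square commutes for every $\vartheta:g\Rightarrow th:x\rightarrow d$. (In fact (KlU) is not needed for that equivalence; it is merely carried along on both sides.) I would then establish the two implications separately.

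As preparation I would make the two pasting operators explicit. By Lemma~\ref{L:2cellslandrpastingoperators}, together with the identification of left extension systems with monads, for $\vartheta:g\Rightarrow th:x\rightarrow d$ one has $\vartheta^{\mathbbm{t}}=\mu h\cdot t\vartheta$, and for $\varphi:p\Rightarrow sq:x\rightarrow c$ one has $\varphi^{\mathbbm{s}}=\mu' q\cdot s\varphi$. Feeding these into the square, and observing that $\kappa h\cdot f\vartheta$ is a 2-cell $fg\Rightarrow s(fh)$, the square turns into the single equation
$$\kappa h\cdot f\mu h\cdot ft\vartheta=\mu' fh\cdot s\kappa h\cdot sf\vartheta\cdot\kappa g.\qquad(\ast)$$

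For the direction (KlM) $\Rightarrow$ $(\ast)$ I would rewrite the right-hand side of $(\ast)$ by the interchange law, $sf\vartheta\cdot\kappa g=\kappa th\cdot ft\vartheta$, collect the result as $(\mu' f\cdot s\kappa\cdot\kappa t)\,h\cdot ft\vartheta$, and then substitute (KlM) in the form $\mu' f\cdot s\kappa\cdot\kappa t=\kappa\cdot f\mu$ whiskered by $h$, landing precisely on the left-hand side. For the converse I would specialize the square to $x=d$, $g=t$, $h=1_d$ and $\vartheta=1_t:t\Rightarrow t\cdot1_d$; because $\mu$ is recovered as $(1_t)^{\mathbbm{t}}$ we get $\vartheta^{\mathbbm{t}}=\mu$ and $f\vartheta=1_{ft}$, so the square collapses to $\kappa\cdot f\mu=(\mu' f\cdot s\kappa)\cdot\kappa t$, which is exactly (KlM). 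The main obstacle is really only the bookkeeping of whiskering versus vertical composition in deriving $(\ast)$ and in the interchange step; the choice of test 2-cell in the converse is dictated by the identity $\mu=(1_t)^{\mathbbm{t}}$.
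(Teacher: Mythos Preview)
Your proposal is correct, and its structure matches the paper's: unfold the pasting operators via $\vartheta^{\mathbbm t}=\mu h\cdot t\vartheta$ and $(\kappa h\cdot f\vartheta)^{\mathbbm s}=\mu'fh\cdot s(\kappa h\cdot f\vartheta)$, then use interchange plus (KlM) for one direction and specialize to $\vartheta=1_t$ (recovering $\mu=(1_t)^{\mathbbm t}$) for the converse. The paper in fact omits a proof for this $\Kl$-proposition and supplies only the dual $\EM$-version, carried out with pasting diagrams rather than your equational calculation; the two presentations are interchangeable and the logical content is identical.
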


\begin{definicion}
Given a 2-category $K$ and a 2-cell $\kappa:ft\Rightarrow sf$ in $K$, the 2-cell is called a \textit{no-iteration monad $\Kl$-morphism $(f,\kappa)$ from $(-)^\mathbbm{t}$ to $(-)^\mathbbm{s}$} if $\kappa$ satisfies KlU and the commutativity of the last diagram in the previous proposition. 
\end{definicion}

For $K=\cat$, we have the following definition for no-iteration monad $\Kl$-morphisms from a monad $T$ to a monad $S$ since clearly there is a bijection between these and the monad $\Kl$-morphisms from $T$ to $S$; this fact is Proposition 2.2.6 in \citet{MM2007}.

\begin{definicion}
Let $(S,\eta',(-)^\mathbbm{S})$ be a monad on $B$ in extensive form and let $(T,\eta,(-)^\mathbbm{T})$ be a monad on $A$ in extensive form. A \textit{no-iteration monad $\Kl$-morphism} (or a \textit{monad $\Kl$-morphism in extensive form}) \textit{from $(T,\eta,(-)^\mathbbm{T})$ to $(S,\eta',(-)^\mathbbm{S})$} consists of a functor $F:A\rightarrow B$ and a family of arrows $\kappa a:FTa\rightarrow SFa$ such that
\begin{enumerate*}
 \item for every $a\in A$ the diagram
 $$\xymatrix{
 & FTa\ar[dd]^{\kappa a} & \\
 Fa\ar[ur]^{F\eta a}\ar[dr]_{\eta' Fa} & & \\
 & SFa &
 }$$
 commutes,
 \item for every $f:a\rightarrow Tb$ in $A$ the diagram
 $$\xymatrix{
 FTa\ar[rr]^{Ff^\mathbbm{T}}\ar[d]_{\kappa a} & & FTb\ar[d]^{\kappa b}\\
 SFa\ar[rr]_{(\kappa b\circ Ff)^\mathbbm{S}} & & SFb
 }$$
 commutes.
\end{enumerate*}
\end{definicion}

We have the analog for monad $\EM$-morphisms in an arbitrary 2-category.

\begin{proposicion}
Let $(c,s,\eta',\mu')$ and $(d,t,\eta,\mu)$ be two monads in a 2-category $K$. Let us call the triangle in the axioms for a monad $\EM$-morphism \textit{$\EM$-compatibility for units (EMU)}
$$\xymatrix{
& sf\ar[dd]^\varphi \\
f\ar[ur]^{\eta'f}\ar[dr]_{f\eta} & \\
& ft
}$$
and call the pentagon in those axioms \textit{$\EM$-compatibility for multiplications (EMM)}
$$\xymatrix{
ssf\ar[r]^{s\varphi}\ar[d]_{\mu'f} & sft\ar[r]^{\varphi t} & ftt\ar[d]^{f\mu}\\
sf\ar[rr]_\varphi & & ft.
}$$
Then, there is a bijective correspondence between 2-cells $\varphi:sf\Rightarrow ft$ satisfying EMU, EMM and 2-cells $\varphi:sf\Rightarrow ft$ satisfying EMU and the commutativity of the diagram
$$\xymatrix{
gsf\ar[r]^{\vartheta^\mathbbm{s}f}\ar[d]_{g\varphi} & hsf\ar[d]^{h\varphi} \\
gft\ar[r]_{(h\varphi\cdot\vartheta f)^\mathbbm{t}} & hft 
}$$
for every 2-cell $\vartheta:g\Rightarrow hs:c\rightarrow y$ in $K$, where $(-)^\mathbbm{t}$ and $(-)^\mathbbm{s}$ are the right pasting operators corresponding to the monads $(d,t,\eta,\mu)$ and $(c,s,\eta',\mu')$, resp.
\end{proposicion}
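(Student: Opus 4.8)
The plan is to treat this as the mirror image of the preceding proposition on monad $\Kl$-morphisms, with left pasting operators replaced by right ones and the structural $2$-cell reversed from $ft\Rightarrow sf$ to $sf\Rightarrow ft$. Since the unit condition EMU appears on both sides of the claimed bijection, that bijection will simply be the identity on the underlying $2$-cell $\varphi:sf\Rightarrow ft$, so everything reduces to proving that, for a fixed such $\varphi$, the multiplication condition EMM holds if and only if the displayed square commutes for every $\vartheta:g\Rightarrow hs:c\rightarrow y$. In particular EMU plays no role in the argument and is carried along only because it belongs to the definition of a monad $\EM$-morphism.

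First I would record the explicit description of the right pasting operators involved. By the theorem above identifying right extension systems with monads (through Lemma~\ref{L:2cellslandrpastingoperators}), the operators $(-)^\mathbbm{s}$ and $(-)^\mathbbm{t}$ attached to $(c,s,\eta',\mu')$ and $(d,t,\eta,\mu)$ act on $2$-cells by $\vartheta^\mathbbm{s}=h\mu'\cdot\vartheta s$ for $\vartheta:g\Rightarrow hs$ and $\zeta^\mathbbm{t}=l\mu\cdot\zeta t$ for $\zeta:k\Rightarrow lt$, and moreover $\mu'=(1_s)^\mathbbm{s}$. These formulas turn every edge of the square into a whiskering of the data $\varphi,\mu,\mu',\vartheta$.

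For the implication from EMM to the square I would expand both legs using these formulas and functoriality of whiskering, obtaining $h\varphi\cdot h\mu' f\cdot\vartheta sf$ for the composite $h\varphi\cdot(\vartheta^\mathbbm{s}f)$ and $hf\mu\cdot h\varphi t\cdot(\vartheta f)t\cdot g\varphi$ for $(h\varphi\cdot\vartheta f)^\mathbbm{t}\cdot g\varphi$. The one nonformal step is a single application of the middle-four interchange law to $\vartheta:g\Rightarrow hs$ and $\varphi:sf\Rightarrow ft$, which rewrites $(\vartheta f)t\cdot g\varphi$ as $hs\varphi\cdot\vartheta sf$; after this substitution the second composite becomes $h(f\mu\cdot\varphi t\cdot s\varphi)\cdot\vartheta sf$ while the first is $h(\varphi\cdot\mu' f)\cdot\vartheta sf$, and the two coincide precisely because EMM asserts $\varphi\cdot\mu' f=f\mu\cdot\varphi t\cdot s\varphi$.

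For the converse I would just specialize the hypothesis to $\vartheta:=1_s:s\Rightarrow s$, regarded as a $2$-cell $g\Rightarrow hs$ with $g=s$ and $h=1_c$. Then $\vartheta^\mathbbm{s}=(1_s)^\mathbbm{s}=\mu'$, the composite $h\varphi\cdot\vartheta f$ is $\varphi$, so the lower edge of the square is $\varphi^\mathbbm{t}=f\mu\cdot\varphi t$, and the square reads exactly $\varphi\cdot\mu' f=f\mu\cdot\varphi t\cdot s\varphi$, i.e.\ EMM. I do not anticipate a genuine obstacle here; the content is identical to the $\Kl$-case after systematically swapping left for right pasting operators and reversing the structural $2$-cell. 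The only thing requiring care is the whiskering and interchange bookkeeping, together with the observation that the entire family of squares is already forced by its single instance at $\vartheta=1_s$ --- that is what makes the converse go through, and it is the exact analogue of the corresponding point in the proof for monad $\Kl$-morphisms.
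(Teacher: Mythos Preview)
Your argument is correct and follows essentially the same approach as the paper: both directions are handled exactly as you describe, with the converse obtained by specializing to $\vartheta=1_s$. The only difference is presentational---the paper carries out the forward direction via a chain of pasting diagrams rather than the explicit equational expansion you give, but the underlying steps (unfold $\vartheta^{\mathbbm{s}}$ as $h\mu'\cdot\vartheta s$, apply interchange, invoke EMM, and recognize the result as $(h\varphi\cdot\vartheta f)^{\mathbbm{t}}\cdot g\varphi$) are identical.
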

\begin{proof}
Let $\varphi:sf\Rightarrow tf$ be a 2-cell in $K$ satisfying EMU and EMM. Let $\vartheta:g\Rightarrow hs:c\rightarrow y$ be a 2-cell in $K$. Then,
$$\xymatrix{
& & c\ar[dr]^g & \\
& c\ar[dr]^s\ar[ur]^s\rrtwocell<\omit>{\ \vartheta^\mathbbm{s}} & & y\ar@{}|{=}[dr] & \\
d\ar[ur]^f\ar[dr]_t\rrtwocell<\omit>{\varphi} & & c\ar[ur]_h & & \\
& d\ar[ur]_f & &
}$$
$$\xymatrix{
& & c\ar[dr]^s\ar[rr]^g\rrtwocell<\omit>{<3>\vartheta} & & y \\
d\ar[r]^f\ar[dr]_t & c\ar[rr]_s\ar[ur]^s\rrtwocell<\omit>{<-3>\,\mu'} & & c\ar[ur]_h & \ar@{}|{=}[r] & \\
\rrtwocell<\omit>{<-4>\varphi} & d\ar[urr]_f & & &
}$$
$$\xymatrix{
& c\ar[dr]^s & & & \\
d\ar[ur]^f\ar[dr]_t\rrtwocell<\omit>{\varphi}\ar@/_2.3pc/[ddrr]_t\ddrrtwocell<\omit>{<3>\mu} & & c\ar[rr]^g\ar[dr]_s\rrtwocell<\omit>{<3>\vartheta} & & y\ar@{}|{=}[r] & \\
& d\ar[ur]_f\ar[dr]_t\rrtwocell<\omit>{\varphi} & & c\ar[ur]_h & \\
& & d\ar[ur]_f & &
}$$
$$\xymatrix{
& c\ar[dr]^s & & & \\
d\ar[ur]^f\ar[r]_t\rrtwocell<\omit>{<-3>\varphi}\ar[dr]_t & d\ar[r]_f\rtwocell<\omit>{<4>\qquad\ (h\varphi\cdot\vartheta f)^\mathbbm{t}} & c\ar[rr]^g & & y \\
& d\ar[rr]_f & & c\ar[ur]_h & .
}$$

Conversely, suppose $\varphi:sf\Rightarrow ft$ is a 2-cell in $K$ satisfying EMU and the commutativity of the last diagram in the proposition.

If $\vartheta=1_s$ in the sequence of equations above, then we get the result.
\end{proof}

\begin{definicion}
Given a 2-category $K$ and a 2-cell $\varphi:sf\Rightarrow ft$ in $K$, the 2-cell is called a \textit{no-iteration monad $\EM$-morphism $(f,\varphi)$ from $(-)^\mathbbm{t}$ to $(-)^\mathbbm{s}$} if $\varphi$ satisfies EMU and the commutativity of the last diagram in the previous proposition.
\end{definicion}

\begin{proposicion}
Let $(c,s,\eta',\mu')$ and $(d,t,\eta,\mu)$ be two monads in a 2-category $K$. Let $(-)^\mathbbm{t}$ and $(-)^\mathbbm{s}$ be the left pasting operators corresponding to the monads $(d,t,\eta,\mu)$ and $(c,s,\eta',\mu')$, resp. Let $\varphi:ft\Rightarrow sf$ and $\varphi':f't\Rightarrow sf'$ be 2-cells in $K$. Then, there is bijective correspondence between 2-cells $\chi:f'\Rightarrow sf$ satisfying the commutativity of the diagram
$$\xymatrix{
f't\ar[r]^{\varphi'}\ar[d]_{\chi t} & sf'\ar[r]^{s\chi} & ssf\ar[d]^{\mu'f} \\
sft\ar[r]_{s\varphi} & ssf\ar[r]_{\mu'f} & sf
}$$
and 2-cells $\chi:f'\Rightarrow sf$ satisfying the commutativity of the diagram
\begin{equation}\label{D:noit2cellKl}
\xymatrix{
f't\ar[r]^{\varphi'}\ar[d]_{\chi t} & sf\ar[d]^{\chi^\mathbbm{s}} \\
sft\ar[r]_{\varphi^{\mathbbm{s}}} & sf.
}
\end{equation}
\end{proposicion}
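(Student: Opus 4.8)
The plan is to unfold the left $\mathbbm{s}$-extension operator appearing in \eqref{D:noit2cellKl} by means of the formula exhibited in the proof of the theorem identifying left extension systems with monads, and then to observe that the equation one obtains is, verbatim, the commutativity of the first diagram. The conclusion will then be that the two conditions imposed on $\chi$ are not merely equivalent but literally the same, so that the asserted bijective correspondence is the identity assignment $\chi\mapsto\chi$ on the set of 2-cells $f'\Rightarrow sf$.

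Concretely, recall that the left pasting operator $(-)^\mathbbm{s}$ attached to the monad $(c,s,\eta',\mu')$ acts by $\vartheta^\mathbbm{s}=\mu' h\cdot s\vartheta$ on a 2-cell $\vartheta\colon g\Rightarrow sh$. First I would apply this to $\chi\colon f'\Rightarrow sf$, obtaining $\chi^\mathbbm{s}=\mu' f\cdot s\chi\colon sf'\Rightarrow sf$, and to $\varphi\colon ft\Rightarrow sf$, obtaining $\varphi^\mathbbm{s}=\mu' f\cdot s\varphi\colon sft\Rightarrow sf$ (so the top right vertex of \eqref{D:noit2cellKl} is $sf'$, the common codomain of $\varphi'$ and domain of $\chi^\mathbbm{s}$). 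Substituting these into the equation encoded by \eqref{D:noit2cellKl}, namely $\chi^\mathbbm{s}\cdot\varphi'=\varphi^\mathbbm{s}\cdot\chi t$, turns it into
$$\mu' f\cdot s\chi\cdot\varphi'=\mu' f\cdot s\varphi\cdot\chi t,$$
which is precisely the equality of the top-then-right composite and the left-then-bottom composite of the first diagram. Hence a 2-cell $\chi\colon f'\Rightarrow sf$ satisfies the first diagram if and only if it satisfies \eqref{D:noit2cellKl}, and the two classes of 2-cells coincide.

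I do not expect any real obstacle: the content of the proposition is exactly that the defining formula for $(-)^\mathbbm{s}$ converts the $\mu'$-mediated square into the extension-operator square. The only points requiring attention are the whiskering bookkeeping --- keeping $s\chi\colon sf'\Rightarrow ssf$ distinct from $\chi t\colon f't\Rightarrow sft$, and likewise for $\varphi$ --- and correctly matching up the boundaries of the two diagrams.
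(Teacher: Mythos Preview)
Your argument is correct: unfolding $\chi^{\mathbbm{s}}=\mu'f\cdot s\chi$ and $\varphi^{\mathbbm{s}}=\mu'f\cdot s\varphi$ shows that the two commutativity conditions are literally the same equation, so the bijection is the identity on 2-cells $\chi\colon f'\Rightarrow sf$. The paper in fact omits the proof of this proposition; the dual $\EM$-transformation proposition is proved there by a pasting-diagram computation that amounts to exactly the same unfolding you carry out symbolically, so your approach matches the paper's intended one.
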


\begin{definicion}
Let $(c,s,\eta',\mu')$ and $(d,t,\eta,\mu)$ be two monads in a 2-category $K$. Let $(-)^\mathbbm{t}$ and $(-)^\mathbbm{s}$ be the left pasting operators corresponding to the monads $(d,t,\eta,\mu)$ and $(c,s,\eta',\mu')$, resp. Let $\varphi:ft\Rightarrow sf$ and $\varphi':f't\Rightarrow sf'$ be no-iteration monad $\Kl$-morphisms. A 2-cell $\chi:f'\Rightarrow sf$ in $K$ satisfying \eqref{D:noit2cellKl} is called a \textit{no-iteration $\Kl$-transformation from $(f,\varphi)$ to $(f',\varphi')$}.
\end{definicion}

\begin{proposicion}
Let $(c,s,\eta',\mu')$ and $(d,t,\eta,\mu)$ be two monads in a 2-category $K$. Let $(-)^\mathbbm{t}$ and $(-)^\mathbbm{s}$ be the right pasting operators corresponding to the monads $(d,t,\eta,\mu)$ and $(c,s,\eta',\mu')$, resp. Let $\varphi:sf\Rightarrow ft$ and $\varphi':sf'\Rightarrow f't$ be 2-cells in $K$. Then, there is bijective correspondence between 2-cells $\varrho:f\Rightarrow f't$ satisfying the commutativity of the diagram
$$\xymatrix{
sf\ar[r]^\varphi\ar[d]_{s\varrho} & ft\ar[r]^{\varrho t} & f'tt\ar[d]^{f'\mu} \\
sf't\ar[r]_{\varphi't} & f'tt\ar[r]_{f'\mu} & f't
}$$
and 2-cells $\varrho:f\Rightarrow tf'$ satisfying the commutativity of the diagram
\begin{equation}\label{D:noit2cellEM}
\xymatrix{
sf\ar[r]^{\varphi}\ar[d]_{s\varrho} & ft\ar[d]^{\varrho^\mathbbm{t}} \\
sf't\ar[r]_{\varphi'^{\mathbbm{t}}} & f't.
}
\end{equation}
\end{proposicion}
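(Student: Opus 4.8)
The plan is to verify that, once the right $\mathbbm{t}$-extension operator is written out explicitly in terms of the multiplication $\mu$ of $t$, the two diagrams in the statement become literally the same equation in the 2-cells $\varrho\colon f\Rightarrow f't$; the asserted bijective correspondence is then just the identity on the set of such $\varrho$. This is the $\EM$-analogue of the preceding ($\Kl$) proposition and is proved by the same device, now using right pasting operators in place of left ones.

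First I would invoke the theorem identifying right extension systems with monads: for the monad $(d,t,\eta,\mu)$, the right pasting operator $(-)^{\mathbbm{t}}$ sends a 2-cell $\vartheta\colon p\Rightarrow qt$ to $\vartheta^{\mathbbm{t}}=q\mu\cdot\vartheta t$. Specialising this twice, first with $\vartheta=\varrho\colon f\Rightarrow f't$ (factoring the target as $(f')t$) and then with $\vartheta=\varphi'\colon sf'\Rightarrow f't$ (again factoring the target as $(f')t$), gives
\[
\varrho^{\mathbbm{t}}=f'\mu\cdot\varrho t\colon ft\Rightarrow f't,\qquad
\varphi'^{\mathbbm{t}}=f'\mu\cdot\varphi' t\colon sf't\Rightarrow f't .
\]
Substituting these into \eqref{D:noit2cellEM}, the composite $\varrho^{\mathbbm{t}}\cdot\varphi$ along its top and right edges becomes $f'\mu\cdot\varrho t\cdot\varphi$, and the composite $\varphi'^{\mathbbm{t}}\cdot s\varrho$ along its left and bottom edges becomes $f'\mu\cdot\varphi' t\cdot s\varrho$; these are precisely the two legs $sf\xrightarrow{\varphi}ft\xrightarrow{\varrho t}f'tt\xrightarrow{f'\mu}f't$ and $sf\xrightarrow{s\varrho}sf't\xrightarrow{\varphi' t}f'tt\xrightarrow{f'\mu}f't$ of the first diagram. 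Hence a 2-cell $\varrho\colon f\Rightarrow f't$ makes the first diagram commute if and only if it makes \eqref{D:noit2cellEM} commute, and the identity map on the set of such 2-cells is the desired bijection.

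Since there is no free parameter left in either diagram, there is essentially nothing to prove beyond this rewriting, and the only point that needs attention is bookkeeping: one must be sure that $(-)^{\mathbbm{t}}$ is applied with the correct factorisations of its sources through $t$, that the whiskerings $\varrho t$, $s\varrho$ and $\varphi' t$ have the domains displayed, and that the $\mu$ occurring in both instances of the pasting operator is the single multiplication $\mu\colon tt\Rightarrow t$ of $(d,t,\eta,\mu)$. I would also record that $\varrho$ is to be read as $f\Rightarrow f't$ throughout (the ``$tf'$'' appearing in the statement is a slip) and that the right pasting operator $(-)^{\mathbbm{s}}$ listed among the hypotheses plays no role here; it is retained only for symmetry with the $\Kl$-version.
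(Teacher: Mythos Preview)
Your proposal is correct and takes essentially the same approach as the paper: both arguments amount to unfolding the right $\mathbbm{t}$-extension operator via $\vartheta^{\mathbbm{t}}=q\mu\cdot\vartheta t$ and observing that the square \eqref{D:noit2cellEM} is then literally the hexagon. The paper records this rewriting as a pair of equal pasting diagrams, whereas you write it out equationally; your remarks about the $tf'$ typo and the irrelevance of $(-)^{\mathbbm{s}}$ are also accurate.
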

\begin{proof}
We have
$$\xymatrix{
& c\ar[dr]_s & \\
d\ar[ur]^f\ar[dr]^t\ar@/_3.2pc/[drr]_t\rrtwocell<\omit>{\varphi}\drrtwocell<\omit>{<5.5>\mu} & & c\ar@{}|{=}[r] & \\
& d\ar[ur]^f\ar[r]^t\urtwocell<\omit>{<2>\varrho} & d\ar[u]_{f'}
}$$
$$\xymatrix{
& c\ar[dr]^s & \\
d\ar[ur]^f\ar[r]^t\urtwocell<\omit>{<2>\varrho}\ar@/_.7pc/[drr]_t\drtwocell<\omit>{<-2.5>\mu} & d\ar[u]_{f'}\ar[dr]^t\rtwocell<\omit>{\varphi'} & c \\
& & d.\ar[u]_{f'}
}$$
\end{proof}

\begin{definicion}
Let $(c,s,\eta',\mu')$ and $(d,t,\eta,\mu)$ be two monads in a 2-category $K$. Let $(-)^\mathbbm{t}$ and $(-)^\mathbbm{s}$ be the right pasting operators corresponding to the monads $(d,t,\eta,\mu)$ and $(c,s,\eta',\mu')$, resp. Let $\varphi:sf\Rightarrow ft$ and $\varphi':sf'\Rightarrow f't$ be no-iteration monad $\EM$-morphisms. A 2-cell $\varrho:f\Rightarrow f't$ in $K$ satisfying \eqref{D:noit2cellEM} is called a \textit{no-iteration $\EM$-transformation from $(f,\varphi)$ to $(f',\varphi')$}.
\end{definicion}

\section{No-iteration distributive laws}\label{S:3}

So there is another way to get a no-iteration version of a distributive law, one without passing through the algebras:

\begin{definicion}\label{D:noitdistlaw}
Let $K$ be a 2-category and $a\in K$. Let $(s,\eta',(-)^\mathbbm{s})$ be a right extension system on $a$ and $(t,\eta,(-)^\mathbbm{t})$ be a left extension system on $a$. A \textit{no-iteration distributive law} or a \textit{distributive law  in extensive form of $(s,\eta',(-)^\mathbbm{s})$ over $(t,\eta,(-)^\mathbbm{t})$} is a 2-cell $\lambda:st\Rightarrow ts$ such that
\begin{enumerate*}
 \item the diagrams
$$\xymatrix{
& st\ar[dd]^\lambda & &
& st\ar[dd]^\lambda\\
s\ar[ur]^{s\eta}\ar[dr]_{\eta s} & & &
t\ar[ur]^{\eta't}\ar[dr]_{t\eta'}\\
& ts & &
& ts
}$$
commute,
 \item for every 2-cell $\vartheta:f\Rightarrow tg:x\rightarrow a$ the diagram
 $$\xymatrix{
 stf\ar[rr]^{s\vartheta^\mathbbm{t}}\ar[d]_{\lambda f} & & stg\ar[d]^{\lambda g}\\
 tsf\ar[rr]_{(\lambda g\cdot s\vartheta)^\mathbbm{t}} & & tsg
 }$$
 commutes,
 \item for every 2-cell $\varkappa:h\Rightarrow ks:a\rightarrow y$ the diagram
 $$\xymatrix{
 hst\ar[rr]^{\varkappa^\mathbbm{s}t}\ar[d]_{h\lambda} & & kst\ar[d]^{k\lambda}\\
 hts\ar[rr]_{(k\lambda\cdot\varkappa t)^\mathbbm{s}} & & kts
 }$$
 commutes.
\end{enumerate*}
\end{definicion}

\begin{proposicion}\label{P:bijdistlawnnoitdistlaw}
Let $K$ be a 2-category and $a\in K$. There is a bijective correspondence between distributive laws of $(s,\eta',\mu')$ over $(t,\eta,\mu)$, monads on $a$, and no-iteration distributive laws of the right extension system $(s,\eta',(-)^\mathbbm{s})$ over the left extension system $(t,\eta,(-)^\mathbbm{t})$, where the latter correspond to $(s,\eta',\mu')$ and $(t,\eta,\mu)$, resp.
\end{proposicion}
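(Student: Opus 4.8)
The plan is to establish the bijection by translating between the classical (monoidal-form) distributive law axioms and the three conditions of Definition~\ref{D:noitdistlaw}, using the dictionary between $\mu$, $\mu'$ and the pasting/extension operators supplied by Lemma~\ref{L:2cellslandrpastingoperators} and the theorem identifying extension systems with monads. Concretely, given a distributive law $\lambda:st\Rightarrow ts$ of $(s,\eta',\mu')$ over $(t,\eta,\mu)$ in monoidal form, I would simply take the \emph{same} 2-cell $\lambda$ and verify it satisfies (1)--(3); conversely, given a no-iteration distributive law I would show the same $\lambda$ satisfies the four classical axioms (two unit axioms involving $\eta$, $\eta'$ and two multiplication axioms involving $\mu$, $\mu'$). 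Since the underlying 2-cell does not change, once both directions are checked the correspondence is automatically bijective (indeed the identity on the set of candidate 2-cells $\lambda$), so there is no separate inverse-computation step.

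The core of the argument is a pair of ``unpacking'' lemmas. First, recall from the proof of the extension-system/monad theorem that for the left extension system $(t,\eta,(-)^\mathbbm{t})$ one has $\vartheta^\mathbbm{t}=\mu g\cdot t\vartheta$ for $\vartheta:f\Rightarrow tg$, and dually for the right extension system $(s,\eta',(-)^\mathbbm{s})$ one has $\varkappa^\mathbbm{s}=k\mu'\cdot\varkappa s$ for $\varkappa:h\Rightarrow ks$. I would substitute these formulas into conditions (2) and (3). For (2): the top edge $s\vartheta^\mathbbm{t}$ becomes $s\mu g\cdot st\vartheta$ and the bottom edge $(\lambda g\cdot s\vartheta)^\mathbbm{t}$ becomes $\mu sg\cdot t\lambda g\cdot ts\vartheta$ (after noting $t(\lambda g\cdot s\vartheta)=t\lambda g\cdot ts\vartheta$), so condition (2) becomes the commutativity of
$$
\lambda g\cdot s\mu g\cdot st\vartheta=\mu sg\cdot t\lambda g\cdot ts\vartheta\cdot\lambda f
$$
for all $\vartheta:f\Rightarrow tg$. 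Taking $\vartheta=1_{tg}$ (so $f=tg$) yields exactly the classical axiom $\lambda\cdot s\mu=\mu s\cdot t\lambda\cdot\lambda t$ (the ``$st\!t\to ts$'' pentagon), and naturality of $\lambda$ together with the monad axioms shows this special case implies the general statement. Symmetrically, plugging the formula for $(-)^\mathbbm{s}$ into condition (3) and specializing $\varkappa=1_{ks}$ recovers the other multiplication axiom $\lambda\cdot\mu' t=t\mu'\cdot\lambda s\cdot s\lambda$. The two triangles in (1) are literally the two unit axioms $\lambda\cdot s\eta=\eta s$ and $\lambda\cdot\eta' t=t\eta'$ rewritten with the convention $\eta=\eta$, $\eta'=\eta'$, so they require no work beyond transcription.

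The main obstacle is the converse direction's ``specialization suffices'' claim: I must check that commutativity of the general squares (2) and (3) is \emph{equivalent} to, not merely implied by, the four classical axioms — i.e.\ that the general 2-cell versions follow from the object/identity versions. This is exactly the kind of whiskering-and-blistering reduction carried out in the proof of the preceding $\EM$-morphism proposition (where setting $\vartheta=1_s$ recovers the general case), and I would mirror that computation: any $\vartheta:f\Rightarrow tg$ factors as $t\eta g\cdot\vartheta$ composed appropriately, or more simply one uses that $(-)^\mathbbm{t}$ is a pasting operator and hence $\vartheta^\mathbbm{t}=(1_{tg})^\mathbbm{t}g\cdot$ (something) by \eqref{E:whiskblist}, reducing every instance to the single generating instance. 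A secondary point to be careful about is bookkeeping of left versus right pasting operators — condition (2) uses the \emph{left} operator $(-)^\mathbbm{t}$ of $t$ while condition (3) uses the \emph{right} operator $(-)^\mathbbm{s}$ of $s$, matching the hypothesis that $s$ carries a right extension system and $t$ a left one — so I would state the two unpacking computations separately rather than trying to dualize one into the other. Once these reductions are in place, assembling the bijection is immediate.
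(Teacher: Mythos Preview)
Your proposal is correct and follows essentially the same route as the paper: both arguments observe that Axiom~1 coincides with the two unit triangles, then translate Axioms~2 and~3 into the classical $\mu$- and $\mu'$-compatibility pentagons by substituting the formulas $\vartheta^{\mathbbm{t}}=\mu g\cdot t\vartheta$ and $\varkappa^{\mathbbm{s}}=k\mu'\cdot\varkappa s$, recovering the classical axioms by specializing to $\vartheta=1_t$ (resp.\ $\varkappa=1_s$) exactly as you describe via \eqref{E:whiskblist} and \eqref{E:whiskblistright}. The paper presents the forward direction with pasting diagrams where you invoke interchange (``naturality of $\lambda$''), but the underlying computation is identical.
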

\begin{proof}
Let $\lambda:st\Rightarrow ts$ be a distributive law of $(s,\eta',\mu')$ over $(t,\eta,\mu)$. Let $(s,\eta',(-)^\mathbbm{s})$ be the right extension system corresponding to $(s,\eta',\mu')$ and let $(t,\eta,(-)^\mathbbm{t})$ be the left extension system corresponding to $(t,\eta,\mu)$.

It is clear that Axiom 1 in Definition~\ref{D:noitdistlaw} holds.

Let $\vartheta:f\Rightarrow tg:x\rightarrow a$ be a 2-cell in $K$. Then, by compatibility of $\lambda$ with multiplication $\mu$,
$$\xymatrix{
x\ar[rr]^f\ar[dr]_g\rrtwocell<\omit>{<3>\vartheta} & & a\ar[rr]^t\ar[dr]_s &\drtwocell<\omit>{<.5>\lambda} & a\ar[dd]^s \\
& a\ar[ur]_t\ar[dr]_s\rrtwocell<\omit>{\lambda} & & a\ar[dr]^t &\ar@{}|{=}[r] &\\
& & a\ar[ur]_t\ar[rr]_t\rrtwocell<\omit>{<-3>\mu} & & a 
}$$
$$\xymatrix{
x\ar[rr]^f\ar[dr]_g\rrtwocell<\omit>{<3>\vartheta} & & a\ar[dr]^t &\\
& a\ar[ur]_t\ar[rr]_t\ar[d]_s\rrtwocell<\omit>{<-3>\mu}\drrtwocell<\omit>{\lambda} & & a\ar[d]^s \\
& a\ar[rr]_t & & a;
}$$
so Axiom 2 in Definition~\ref{D:noitdistlaw} holds for $\lambda$.

Likewise, by compatibility of $\lambda$ with $\mu'$, Axiom 3 holds.

Conversely, let $\lambda:st\Rightarrow ts$ be a 2-cell in $K$ such that it is a no-iteration distributive law of a right extension system $(s,\eta',(-)^\mathbbm{s})$ over a left extension system $(t,\eta,(-)^\mathbbm{t})$.

Trivially, $\lambda$ is compatible with the unit of $s$ and that of $t$. By Equation~\eqref{E:whiskblist},
$$\lambda^\mathbbm{t}=(1_t)^\mathbbm{t}s\cdot t\lambda;$$
so if $\vartheta=1_t$ in Axiom 2 for a no-iteration distributive law, then we have the compatibility of $\lambda$ with $(1_t)^\mathbbm{t}$, the multiplication of the monad $(t,\eta,(1_t)^\mathbbm{t})$.

Analogously, by Equation~\eqref{E:whiskblistright},
$$\lambda^\mathbbm{s}=t(1_s)^\mathbbm{s}\cdot\lambda s;$$
so if $\varkappa=1_s$ in Axiom 3 for a no-iteration distributive law, then we have the compatibility of $\lambda$ with $(1_s)^\mathbbm{s}$, the multiplication of the monad $(s,\eta',(1_s)^\mathbbm{s})$.
\end{proof}

Since $A\cong\Cat(\uno,A)$ for every category $A\in\Cat$, the previous definition for $\Cat$ becomes:

\begin{definicion}\label{D:noitdistlawcat}
Let $(S,\eta',(-)^\mathbbm{S})$ be a right extension system on a category $A$ and let $(T,\eta,(-)^\mathbbm{T})$ be a monad on $A$ in extensive form. A \textit{no-iteration distributive law} (or a \textit{distributive law in extensive form}) \textit{of $(S,\eta',(-)^\mathbbm{S})$ over $(T,\eta,(-)^\mathbbm{T})$} consists of a family of arrows $\lambda a:STa\rightarrow TSa$ such that
\begin{enumerate*}
 \item for every $a\in A$ the diagrams
 $$\xymatrix{
 & STa\ar[dd]^{\lambda a} & &
 & STa\ar[dd]^{\lambda a}\\
 Sa\ar[ur]^{S\eta a}\ar[dr]_{\eta Sa} & & &
 Ta\ar[ur]^{\eta'Ta}\ar[dr]_{T\eta'a} & \\
 & TSa & &
 & TSa
 }$$
 commute,
 \item for every $f:a\rightarrow Tb$ in $A$ the diagram
 $$\xymatrix{
 STa\ar[rr]^{Sf^\mathbbm{T}}\ar[d]_{\lambda a} & & STb\ar[d]^{\lambda b}\\
 TSa\ar[rr]_{(\lambda b\circ Sf)^\mathbbm{T}} & & TSb
 }$$
 commutes,
 \item for every natural transformation $\kappa:H\Rightarrow KS:A\rightarrow Y$ and every $a\in A$ the diagram
 $$\xymatrix{
 HSTa\ar[rr]^{\kappa^\mathbbm{S}Ta}\ar[d]_{H\lambda a} & & KSTa\ar[d]^{K\lambda a}\\
 HTSa\ar[rr]_{(K\lambda\cdot\kappa T)^\mathbbm{S}a} & & KTSa
 }$$
 conmutes.
\end{enumerate*}
\end{definicion}

And we get the proposition:

\begin{proposicion}
Let $A$ be a category. There is a bijective correspondence between distributive laws of $(S,\eta',\mu')$ over $(T,\eta,\mu)$, monads on $A$, and no-iteration distributive laws of the right extension system of $(S,\eta',(-)^\mathbbm{S})$ over the monad $(T,\eta,(-)^\mathbbm{T})$ in extensive form.
\end{proposicion}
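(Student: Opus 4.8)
The plan is to obtain this as the $K=\Cat$ case of Proposition~\ref{P:bijdistlawnnoitdistlaw}, specialized to the object $a=A$ via the isomorphism $A\cong\Cat(\uno,A)$ and the bijections already recalled (monads in monoidal form $\leftrightarrow$ monads in extensive form $\leftrightarrow$ left/right extension systems). Concretely, a right extension system $(S,\eta',(-)^\mathbbm{S})$ on the category $A$ and a monad $(T,\eta,(-)^\mathbbm{T})$ on $A$ in extensive form are exactly a right extension system on the object $A$ of $\Cat$ and a left extension system on that object (the extension operators acting componentwise), and these correspond to the monads $(S,\eta',\mu')$ and $(T,\eta,\mu)$. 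So the whole task reduces to checking that Definition~\ref{D:noitdistlawcat} records the same notion as Definition~\ref{D:noitdistlaw} read in $\Cat$; the only delicate points are that Definition~\ref{D:noitdistlawcat} asks merely for a family of arrows $\lambda a\colon STa\to TSa$ (not a natural transformation) and quantifies Axiom~2 over arrows $f\colon a\to Tb$ rather than over all 2-cells.

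For the data and Axioms~1 and~3 there is nothing to do: a 2-cell $ST\Rightarrow TS$ in $\Cat$ is a natural transformation with components $\lambda a$, Axiom~1 of Definition~\ref{D:noitdistlaw} is Axiom~1 of Definition~\ref{D:noitdistlawcat} read at each object, and Axiom~3 of Definition~\ref{D:noitdistlaw} already ranges over an arbitrary codomain $y$ and 2-cell $\varkappa\colon h\Rightarrow ks\colon a\to y$ --- i.e. over an arbitrary category $Y$ and natural transformation $\kappa\colon H\Rightarrow KS$ --- so, since an equality of 2-cells in $\Cat$ is an equality of components, it is Axiom~3 of Definition~\ref{D:noitdistlawcat}. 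For Axiom~2 I would argue that the ``all 2-cells'' version is equivalent to the ``all arrows'' version: in $\Cat$ the functors $S,T$, the transformation $\lambda$, and the left $\mathbbm{t}$-extension operator are all computed componentwise --- the last because, under the bijection with monads, $(\vartheta^\mathbbm{t})_x=\mu_{Gx}\circ T\vartheta_x=(\vartheta_x)^\mathbbm{T}$ for $\vartheta\colon F\Rightarrow TG\colon X\to A$ --- so the Axiom~2 square of Definition~\ref{D:noitdistlaw} for $\vartheta$ commutes iff it commutes at every $x\in X$, where its component is precisely the Axiom~2 square of Definition~\ref{D:noitdistlawcat} for the arrow $\vartheta_x\colon Fx\to TGx$; conversely every arrow $f\colon a\to Tb$ occurs as such a component (take $X=\uno$). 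Equivalently, one may reduce both sides to compatibility of $\lambda$ with $\mu$, exactly as in the proof of Proposition~\ref{P:bijdistlawnnoitdistlaw}.

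Finally, to upgrade a family $(\lambda a)_{a\in A}$ satisfying the axioms of Definition~\ref{D:noitdistlawcat} to a 2-cell, I would show it is automatically natural: given $h\colon a\to b$, put $f=\eta b\circ h\colon a\to Tb$ in Axiom~2; then $f^\mathbbm{T}=Th$ by the definition of the functor $T$ from its extension operation, and by the first triangle of Axiom~1 one gets $\lambda b\circ Sf=\eta Sb\circ Sh$, whence $(\lambda b\circ Sf)^\mathbbm{T}=TSh$, so that the Axiom~2 square for this $f$ is exactly the naturality square $\lambda b\circ STh=TSh\circ\lambda a$. Composing the bijections, families satisfying Definition~\ref{D:noitdistlawcat} correspond bijectively to no-iteration distributive laws in the sense of Definition~\ref{D:noitdistlaw} for $K=\Cat$, hence by Proposition~\ref{P:bijdistlawnnoitdistlaw} to distributive laws of $(S,\eta',\mu')$ over $(T,\eta,\mu)$, and the explicit formulas there (restriction and extension along units) transport unchanged. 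I expect the only real obstacle to be this bookkeeping --- the componentwise reduction of Axiom~2 and the automatic naturality of $(\lambda a)$ --- which is routine but must be done carefully so that no condition is silently added or dropped in passing between the ``2-cell'' and ``arrow'' formulations.
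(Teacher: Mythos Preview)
Your proposal is correct and follows essentially the same approach as the paper: both reduce to Proposition~\ref{P:bijdistlawnnoitdistlaw} via the identification $A\cong\Cat(\uno,A)$, handle Axiom~2 by passing between arrows $f\colon a\to Tb$ and 2-cells out of $\uno$, and derive naturality of the family $(\lambda a)$ by substituting $f=\eta b\circ h$ into Axiom~2 together with the first triangle of Axiom~1. The only cosmetic difference is organizational---you first establish that Definition~\ref{D:noitdistlawcat} coincides with Definition~\ref{D:noitdistlaw} read in $\Cat$ and then invoke Proposition~\ref{P:bijdistlawnnoitdistlaw}, whereas the paper interleaves these steps---but the content is the same.
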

\begin{proof}
Let $\lambda:ST\Rightarrow TS$ be a distributive law of $(S,\eta'\mu')$ over $(T,\eta,\mu)$, monads on $A$. Let $(S,\eta',(-)^\mathbbm{S})$ be the right extension system corresponding to the monad $(S,\eta'\mu')$ and let $(T,\eta,(-)^\mathbbm{T})$ be the monad in extensive form corresponding to $(T,\eta,\mu)$.

Axiom 1 in Definition~\ref{D:noitdistlawcat} trivially holds for the family of arrows $\lambda a:STa\rightarrow TSa$.

Now let $f:a\rightarrow Tb$ be an arrow in $A$. Consider the natural transformation
$$\xymatrix{
\uno\ar[rr]^a\ar[dr]_b\rrtwocell<\omit>{<3>f} & & A\\
 & A\ar[ur]_T &
}$$
and paste as in the proof of Proposition~\ref{P:bijdistlawnnoitdistlaw}. So Axiom 2 in Definition~\ref{D:noitdistlawcat} holds.

The proof for Axiom 3 is the same as the corresponding one in Proposition~\ref{P:bijdistlawnnoitdistlaw}.

Conversely, let $(S,\eta',(-)^\mathbbm{S})$ be a right extension system on $A$ and let $(T,\eta,(-)^\mathbbm{T})$ be a monad on $A$ in extensive form. Let $\lambda$ be a no-iteration distributive law of $(S,\eta',(-)^\mathbbm{S})$ over $(T,\eta,(-)^\mathbbm{T})$.

Compatibility of $\lambda$ with $\eta'$ and $\eta$ trivially holds.

Note that
$$\cat(\uno,A)(a,Tb)\cong A(a,Tb);$$
whence, if we denote, as $[-]^\mathbbm{T}$, the $\mathbbm{T}$-left extension operator induced by the monad $(T,\eta,\mu)$ induced by the monad $(T,\eta,(-)^\mathbbm{T})$ in extensive form, we have that
$$(-)^\mathbbm{T}_{a,b}:A(a,Tb)\rightarrow A(Ta, Tb)$$
is
$$[-]^\mathbbm{T}_{a,b}:\cat(\uno,A)(a,Tb)\rightarrow\cat(\uno,A)(Ta,Tb).$$
Therefore, by the proof for the compatibility of $\lambda$ with $\mu$ in Proposition~\ref{P:bijdistlawnnoitdistlaw}, we have the compatibility of $\lambda$ with $\mu$ in this case.

Let us see that $\lambda$ is a natural transformation. Let $h:c\rightarrow d$ be an arrow in $A$. Then,
\begin{align}
\lambda d\circ STh &= \lambda d\circ S(\eta d\circ h)^\mathbbm{T}\notag\\
                   &= (\lambda d\circ S\eta d\circ Sh)^\mathbbm{T}\circ\lambda c &\text{Definition~\ref{D:noitdistlawcat}(2)}\notag\\
                   &= (\eta Sd\circ Sh)^\mathbbm{T}\circ\lambda c &\text{Definition~\ref{D:noitdistlawcat}(1)}\notag\\
                   &= TSh\circ\lambda c\notag.
\end{align}

The compatibility of $\lambda$ with $\mu'$ follows from Proposition~\ref{P:bijdistlawnnoitdistlaw}.
\end{proof}

\begin{observacion}
Theorem~\ref{T:bijdistlawalg} and Proposition~\ref{P:bijdistlawnnoitdistlaw} give us a characterization in between. Let $K$ be a 2-category and $a\in K$. A no-iteration distributive law of the left extension system $(s,\eta',(-)^\mathbbm{s})$ on $a$ over the left extension system $(t,\eta,(-)^\mathbbm{t})$ on $a$ is an $\mathbbm{s}$-algebra $(ts,(-)^{\bm{\lambda}})$ such that
\begin{enumerate*}
 \item for every $\gamma:g\Rightarrow sh:x\rightarrow a$ and $\vartheta:f\Rightarrow tsg:x\rightarrow a$ the diagram
$$\xymatrix{
sf\ar[r]^{\vartheta^{\bm{\lambda}}}\ar[dr]_{(t\gamma^\mathbbm{s}\cdot\vartheta)^{\bm{\lambda}}} & tsg\ar[d]^{t\gamma^\mathbbm{s}} \\
& tsh
}$$
commutes,
 \item the diagram
$$\xymatrix{
& st\ar[dd]^\lambda \\
s\ar[ur]^{s\eta}\ar[dr]_{\eta s} & \\
& ts
}$$
commutes and
\item for every $\nu:u\Rightarrow tv:y\rightarrow a$ the diagram
$$\xymatrix{
 stu\ar[rr]^{s\nu^\mathbbm{t}}\ar[d]_{\lambda u} & & stv\ar[d]^{\lambda v}\\
 tsu\ar[rr]_{(\lambda v\cdot s\nu)^\mathbbm{t}} & & tsv
 }$$
commutes,
\end{enumerate*}
where $\lambda:=(t\eta')^{\bm{\lambda}}$ in 2 and 3.

In $K=\Cat$ this becomes: a no-iteration distributive law of the monad in extension form $(S,\eta',(-)^{\mathbbm{S}})$ over the monad in extension form $(T,\eta,(-)^{\mathbbm{T}})$, both on $A$, consists of
\begin{enumerate*}
 \item for every $a\in A$ an $\mathbbm{S}$-algebra $(TSa,(-)^{\bm{\lambda} a})$,
 \item for every $h:b\rightarrow Sa$, $(\eta Sa\circ h^\mathbbm{S})^\mathbbm{T}:(TSb,(-)^{\bm{\lambda} b})\rightarrow(TSa,(-)^{\bm{\lambda} a})$ is a morphism of $\mathbbm{S}$-algebras,
\end{enumerate*}
and such that
\begin{enumerate*}
\setcounter{enumi}{2}
 \item for every $a\in A$ the diagram
 $$\xymatrix{
 & STa\ar[dd]^{\lambda a} \\
 Sa\ar[ur]^{(\eta'Ta\circ\eta a)^\mathbbm{S}}\ar[dr]_{\eta Sa} & \\
 & TSa
 }$$
 commutes,
 \item for every $f:a\rightarrow Tb$ in $A$ the diagram
 $$\xymatrix{
 STa\ar[rrr]^{(\eta'Tb\circ f^\mathbbm{T})^\mathbbm{S}}\ar[d]_{\lambda a} & & & STb\ar[d]^{\lambda b} \\
 TSa\ar[rrr]_{(\lambda b\circ (\eta'Tb\circ f)^\mathbbm{S})^\mathbbm{T}} & & & TSb
 }$$
 commutes,
 where $\lambda a:=((\eta Sa\circ\eta'a)^\mathbbm{T})^{\bm{\lambda} a}$.
\end{enumerate*}
\end{observacion}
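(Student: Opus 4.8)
The plan is to obtain both assertions by composing the two bijections already available. For the $2$-categorical statement: Theorem~\ref{T:bijdistlawalg} yields a bijection between distributive laws $\lambda:st\Rightarrow ts$ of the monoidal-form monads $(s,\eta',\mu')$ and $(t,\eta,\mu)$ determined by the given left extension systems and $\mathbbm{s}$-algebras $(ts,(-)^{\bm{\lambda}})$ satisfying the three conditions stated there; Proposition~\ref{P:bijdistlawnnoitdistlaw} yields a bijection between those same $\lambda$'s (now read over the right extension system attached to $(s,\eta',\mu')$ and the left one attached to $(t,\eta,\mu)$) and no-iteration distributive laws in the sense of Definition~\ref{D:noitdistlaw}. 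Composing, a no-iteration distributive law corresponds bijectively to an $\mathbbm{s}$-algebra $(ts,(-)^{\bm{\lambda}})$ satisfying Theorem~\ref{T:bijdistlawalg}(1)--(3), so it remains only to verify that, for such data and with $\lambda:=(t\eta')^{\bm{\lambda}}$, conditions (1)--(3) of Theorem~\ref{T:bijdistlawalg} are equivalent to conditions (1)--(3) of the Observation.

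Condition (1) is literally the first condition of Theorem~\ref{T:bijdistlawalg}, so there is nothing to do there. For the other two I would use that a left pasting operator is determined by its value on an identity: setting $\alpha:=(1_{ts})^{\bm{\lambda}}:sts\Rightarrow ts$, Equation~\eqref{E:whiskblist} gives $\vartheta^{\bm{\lambda}}=\alpha g\cdot s\vartheta$ for every $\vartheta:f\Rightarrow tsg$, whence $\lambda=(t\eta')^{\bm{\lambda}}=\alpha\cdot st\eta'$ and, by blistering, $(t\eta'\cdot\eta)^{\bm{\lambda}}=(t\eta')^{\bm{\lambda}}\cdot s\eta=\lambda\cdot s\eta$; thus the equation $(t\eta'\cdot\eta)^{\bm{\lambda}}=\eta s$ of Theorem~\ref{T:bijdistlawalg} is exactly Observation~(2). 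For the passage between Theorem~\ref{T:bijdistlawalg}(3) and Observation~(3) I would substitute $\vartheta^{\bm{\lambda}}=\alpha g\cdot s\vartheta$ and rewrite $(\kappa^{\bm{\lambda}})^\mathbbm{t}$ by Equation~\eqref{E:whiskblist} applied to $(1_t)^\mathbbm{t}=\mu$; simplifying the resulting $\zeta,\kappa$-diagram with Condition~(1) and the unit and associativity axioms of the $\mathbbm{s}$-algebra should collapse it to the compatibility of $\lambda$ with $\mu$, which is precisely Observation~(3) ($=$ Axiom~2 of Definition~\ref{D:noitdistlaw}). Conversely, these same equivalences show that Observation~(1)--(3) imply Theorem~\ref{T:bijdistlawalg}(1)--(3) --- the compatibility of $\lambda$ with $\eta'$ and with $\mu'$ (i.e.\ the remaining unit triangle and Axiom~3 of Definition~\ref{D:noitdistlaw}) being absorbed, as in \citet{MRW2002}, into Condition~(1) together with the $\mathbbm{s}$-algebra axioms --- so that Theorem~\ref{T:bijdistlawalg} makes $\lambda$ a distributive law and Proposition~\ref{P:bijdistlawnnoitdistlaw} then makes it a no-iteration distributive law. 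The step I expect to be the main obstacle is this reconciliation of Theorem~\ref{T:bijdistlawalg}(3) with Observation~(3): one must check that trading the iterate implicit in Theorem~\ref{T:bijdistlawalg}(3) for a single application of $(-)^\mathbbm{t}$ at $\vartheta=1_t$ loses no information, i.e.\ that Condition~(1) plus compatibility of $\lambda$ with $\mu$ really do regenerate the full $\zeta,\kappa$-diagram --- a bookkeeping chase turning on how $(-)^{\bm{\lambda}}$ interacts with the monad $t$ acting on $ts$.

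For the $\Cat$-version I would specialize the $2$-categorical statement along $A\cong\Cat(\uno,A)$, exactly as in the passage from Definition~\ref{D:noitdistlaw} to Definition~\ref{D:noitdistlawcat} and in the proof of the Proposition immediately preceding this Observation. A single $2$-cell $(-)^{\bm{\lambda}}$ on $ts$ unpacks, object by object, into an $\mathbbm{S}$-algebra $(TSa,(-)^{\bm{\lambda} a})$ for each $a\in A$, which is Condition~1 of the $\Cat$-version. Condition~(1) of the $2$-categorical statement, instantiated with $\gamma$ the natural transformation $b\Rightarrow Sa$ represented by an arrow $h:b\to Sa$ and with $\vartheta$ ranging over elements, turns into the assertion that $(\eta Sa\circ h^\mathbbm{S})^\mathbbm{T}$ is a morphism of $\mathbbm{S}$-algebras --- Condition~2 --- extracted just as the naturality of $\lambda$ was extracted in the previous proof, using that $(-)^\mathbbm{T}$ is the extension operator of a monad so that $h^\mathbbm{S}$, $\eta Sa$ and $(-)^\mathbbm{T}$ compose as displayed. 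Conditions~(2)--(3) of the $2$-categorical statement become Conditions~3--4 via $\cat(\uno,A)(a,Tb)\cong A(a,Tb)$ and $\lambda a=((\eta Sa\circ\eta'a)^\mathbbm{T})^{\bm{\lambda} a}$. The only delicate point here is again bookkeeping: seeing that the element-wise form of Condition~(1) of the $2$-categorical statement splits cleanly into the pointwise-$\mathbbm{S}$-algebra part and the $\mathbbm{S}$-algebra-morphism part.
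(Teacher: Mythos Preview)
The paper gives no proof of this Observation: it is stated as an immediate consequence of combining Theorem~\ref{T:bijdistlawalg} and Proposition~\ref{P:bijdistlawnnoitdistlaw}, and the text moves directly to the bibliography. Your proposal is therefore not competing with a paper proof but filling in what the paper leaves implicit, and your overall strategy --- compose the two bijections and reconcile the axiom lists --- is exactly what the opening sentence of the Observation asserts.

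Your handling of condition~(1) (identical in both) and of Theorem~\ref{T:bijdistlawalg}(2) versus Observation~(2) via $(t\eta'\cdot\eta)^{\bm{\lambda}}=(t\eta')^{\bm{\lambda}}\cdot s\eta=\lambda\cdot s\eta$ is correct. For the remaining match you make your own life slightly harder than necessary by attempting a direct equivalence between Theorem~\ref{T:bijdistlawalg}(3) and Observation~(3). A cleaner route, closer to the spirit of the text, is to notice that the Observation's axiom list is a \emph{hybrid}: the $\mathbbm{s}$-algebra structure together with Observation~(1) packages the $s$-side compatibilities (with $\eta'$ and $\mu'$) in the algebra language of \citet{MRW2002}/\citet{MW2010}, while Observation~(2)--(3) are literally the $\eta$-triangle and Axiom~2 of Definition~\ref{D:noitdistlaw}. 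Thus one may argue via the underlying distributive law $\lambda$ rather than by matching condition~(3) against condition~(3): the $\mathbbm{s}$-algebra plus Observation~(1) make $\lambda$ compatible with $\eta',\mu'$; Observation~(2)--(3) make it compatible with $\eta,\mu$ in no-iteration form; hence $\lambda$ is a distributive law, and conversely. This sidesteps the ``bookkeeping chase'' you anticipate.

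Your $\Cat$-specialisation is handled correctly and parallels the paper's own passage from Definition~\ref{D:noitdistlaw} to Definition~\ref{D:noitdistlawcat}.
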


\end{document}